\newtheorem{maintheorem}{Theorem}
\newtheorem{theorem}{Theorem}[section]
\newtheorem{corollary}[theorem]{Corollary}
\newtheorem{proposition}[theorem]{Proposition}
\newtheorem{lemma}[theorem]{Lemma}
\newtheorem{definition}[theorem]{Definition}
\theoremstyle{remark}
\newtheorem{remark}[theorem]{Remark}
\newtheorem{question}{Question}
\title[Classification of conditional measures]{Classification of conditional measures along certain invariant one-dimensional foliations}
\author{M. Espitia}
\address{Departamento de Matem\'atica, Estat\'istica e Computa\c c\~ao Cient\'ifica,
	IMECC-UNICAMP, Campinas-SP, Brazil.}
\email{maresno@gmail.com}
\author{G. Ponce}
\address{Departamento de Matem\'atica, Estat\'istica e Computa\c c\~ao Cient\'ifica,
	IMECC-UNICAMP, Campinas-SP, Brazil.}
\email{gaponce@unicamp.br}
\author{R. Var\~{a}o} 
\address{Departamento de Matem\'atica, Estat\'istica e Computa\c c\~ao Cient\'ifica,
	IMECC-UNICAMP, Campinas-SP, Brazil.}
\email{varao@unicamp.br}
\begin{document}
	\onehalfspace
	\maketitle
	
	\begin{abstract} 
Let $f:M\to M$ be a homeomorphism over a compact Riemannian manifold, ergodic with respect to a measure $\mu$ defined on the completion of the Borel $\sigma$-algebra and  $\mathcal F$  a $f$-invariant one dimensional continuous foliation of $M$ by $C^1$-leaves. Then, if $f$ preserves a continuous $\mathcal{F}$-arc length system, then we only have three possibilities for the conditional measures of $\mu$ along $\mathcal F$, namely:
		\begin{itemize}
		    \item they are atomic for almost every leaf, or
		      \item for almost every leaf they are equivalent to the measure $\lambda_x$ induced by the invariant arc-length system over $\mathcal F$, or
              \item for almost every leaf their support is a nowhere dense, perfect subset of the leaf.
		\end{itemize}

Furthermore, we show that restricted to ergodic partially hyperbolic diffeomorphism with one-dimensional topological neutral center direction, we are able to eliminate the third case obtaining a dichotomy.

	\end{abstract}

	\setcounter{tocdepth}{1}
	\tableofcontents

	\section{Introduction}
	
	
	Given a topological space $X$, $\mathcal B$ its Borel sigma-algebra and $\mu$ a probability measure on $X$, it is well known that for any sub-sigma algebra $\mathcal E\subset \mathcal B$, $\mu$ may be disintegrated over the partition induced by $\mathcal E$ in the sense that we may find a system of probability measures $\{\mu_x\}_{x\in X}$, such that $\mu_{x}\left(\bigcap_{x\in E, E\in \mathcal E}E \right) =1$, $x\mapsto \mu_x$ is Borel measurable and $\mu(B) = \int \mu_x(B) d\mu$ for any $B\in \mathcal B$. 
	This system is referred to as being the disintegration of $\mu$ along $\mathcal E$. This fact was latter extended to the more general context of Lebesgue spaces, where V. Rokhlin (see \cite{Ro52}) proved that $\mu$ may be disintegrated over any measurable partition, that is, over any partition that is countably generated by elements of the $\sigma$-algebra of the Lebesgue space in consideration.
	
	The mentioned theorem of Rokhlin is extensively used in dynamical systems and was further extended to more general contexts (see for example \cite{Simmons}, \cite{PossobonRodrigues}). In general, given a certain dynamics, in many contexts this dynamics admits certain invariant partitions which are dynamically defined and the study of the disintegration of the invariant measures along these partitions usually yields important properties of the dynamics. In some cases these partitions are naturally given by invariant foliations of the dynamics.
	
	In smooth ergodic theory for example, Anosov and partially hyperbolic diffeomorphisms admit a pair of invariant foliations called stable and unstable foliations, which we will denote here by $\mathcal F^s$ and $\mathcal F^u$. In these contexts, measure disintegration techniques have been an essential tool to obtain ergodicity and rigidity of some properties such as regular conjugacy between certain $C^1$-close Anosov maps. 
	
	In his seminal work D. Anosov proved \cite{Anosov1} that the disintegration of the volume measure along the unstable (resp. stable) foliation of a volume preserving Anosov diffeomorphism is absolutely continuous with respect to the leaf measure. This result was generalized to the stable and unstable foliation of partially hyperbolic diffeomorphisms(see \cite{BP} for example). This is clearly not the general case for an arbitrary foliation, an example for which absolute continuity does not occur was given by A. Katok \cite{Mi} and there are now several examples of other natures in the literature. More specifically,  Katok's example shows a foliation by analytic curves of $(0,1)\times \mathbb R / \mathbb Z$ such that there exists a full Lebesgue measure set which intersects each leaf in exactly one point. In this case we say that the conditional measures along the leaves are atomic, or that the foliation is atomic with respect to the referrence measure - which in the example of Katok is the standard two dimensional Lebesgue measure.
	
	Atomicity and absolute continuity are two extremes among the possibilities that one could expect when studying the conditional measures along a foliation. The first one is equivalent to saying that the conditional measures are atomic measures and the last one implies that the conditional measures are absolutely continuous with respect to the Riemannian measure of the leaf, a property which is usually called leafwise absolute continuity or Lebesgue disintegration of measure. Although these are two extreme behaviors among, a priori, many possibilities for the disintegration of a measure, recent results have indicated that this dichotomy is more frequent than one would at first expect.
	In \cite{RW} D. Ruelle and A. Wilkinson proved that for certain skew product type of partially hyperbolic dynamics, if the fiberwise Lyapunov exponent is negative then the disintegration of the preserved measure along the fibers is atomic. Later A. Homburg \cite{homburg.atomic} proved that for some examples treated in \cite{RW} one can actually prove that the disintegration is composed by only one atom per leaf. A. Avila, M. Viana and A. Wilkinson \cite{AVW} proved that for $C^1$- volume preserving perturbations of the time-$1$ map of geodesic flows on negatively curved surfaces, the disintegration of the volume measure along the center foliation is either atomic or absolutely continuous and that in the latter case the perturbation should be itself the time-$1$ map of an Anosov flow. Although inside the class of derived from Anosov diffeomorphisms non-Lebesgue and non-atomic disintegration was given in \cite{Va},
 G. Ponce, A. Tahzibi and R. Var\~ao \cite{PTV} exhibited an open class of volume preserving diffeomorphisms which have (mono) atomic disintegration along the center foliation and, recently, A. Tahzibi and J. Zhang \cite{TahzibiZhang} proved that non-hyperbolic measures of derived from Anosov diffeomorphisms on $\mathbb T^3$ also must have atomic disintegration along the center foliation, answering a question from \cite{PT}.

	
	
	In this paper our main goal is to better understand the disintegration of an invariant measure along an invariant foliation for the dynamics without requiring hyperbolicity or partial hyperbolicity for $f$ but assuming that the invariant foliation has some type of metric rigidity with respect to $f$. In other words, we aim to investigate what are the possible characterizations of the conditional measures obtained when we disintegrate $\mu$ over a foliation $\mathcal F$, assuming that the behavior of $f$ along $\mathcal F$ is very far from being hyperbolic.

    	\subsection{Statement of results}

        Before stating our main result we recall that a topological space $X$ is called a \textit{Cantor space} if it is homeomorphic to the standard ternary Cantor set. If $X$ is a Cantor space with the topology induced by a certain space $N\supset X$, we will also say that $X$ is a Cantor set or a Cantor subset of $N$.
        
        It is well known that a non-empty metric space $X$ is a Cantor space if, and only if, it is perfect, compact and totally disconnected. We will say that a topological space $X$ is a \textit{$\sigma$-Cantor space} if there is a sequence of nested Cantor spaces $K_1\subset K_2 \subset \ldots $ such that $X = \bigcup_{i=1}^{\infty} K_i$. Analogously, for $X\subset N$ we will say that $X$ is a $\sigma$-Cantor set or $\sigma$-Cantor subset of $N$ if it is a $\sigma$-Cantor space with the topology induced from $N$.

       In particular, observe that in the case where $X\subset \mathbb R$, $X$ is a $\sigma$-Cantor subset if and only if it is perfect and with empty interior, since the only connected sets in $\mathbb R$ are intervals.

       Given a homeomorphism $f:M \to M$ over a compact Riemannian manifold $M$ and $\mathcal F$ a $f$-invariant one dimensional continuous foliation of M whose leaves are $C^1$ immersed submanifolds of $M$, a \textit{$\mathcal F$-arc length system} is a continuous system of functions $\{l_x\}_{x\in M}$ where each $l_x$ is a function defined on the the equivalence classes of simple arcs of $\mathcal F(x)$ associating to each class a non-negative real number in a ``metric like fashion'', that is, it vanishes only on degenerate arcs and is additive. Furthermore this system is $f$-invariant in the sense that  $l_x \circ f = l_{f(x)}$. The formal definition is presented in Definition \ref{Flengthsystem}.
	
	The following is our first main result.
	
	\begin{restatable}{maintheorem}{introduction}\label{theorem:A}
		Let $f:M\to M$ be a  homeomorphism over a compact Riemannian manifold, $\mathcal F$  be a $f$-invariant one dimensional continuous foliation of $M$ by $C^1$-leaves and $\{l_x\}$ a $\mathcal{F}$-arc length system. If $f$ is ergodic with respect to a $f$-invariant probability measure $\mu$ then one of the following holds:
		\begin{itemize}
			\item[a)] the disintegration of $\mu$ along  $\mathcal F$ is atomic.
			\item[b)] for almost every $x\in M$, the conditional measure on $\mathcal F(x)$ is equivalent to the measure $\lambda_{x}$ defined on simple arcs of $\mathcal F(x)$ by: \[\lambda_{x}(\gamma([0,1]))=l_{x}(\gamma), \; \text{where} \; \gamma \; \text{is a simple arc.}\]
			\item[c)] for almost every $x\in M$, the conditional measure on $\mathcal F(x)$ is supported in a $\sigma$-Cantor subset of $\mathcal F(x)$. Equivalently, in the leaf topology this set is perfect and with empty interior. 
		\end{itemize}
	\end{restatable}

	The existence of invariant systems of metrics was obtained in \cite{BonattiZhang} for the context of transitive partially hyperbolic diffeomorphisms with topological neutral center, meaning that $f$ and $f^{-1}$ have Lyapunov stable center direction (see \cite[section 7.3.1]{HHUSurvey}), i.e, given any $\varepsilon>0$ there exists $\delta>0$ for which, given any $C^1$ path $\gamma$ tangent to the center direction, one has
	\[\operatorname{length}(\gamma) <\delta \Rightarrow \operatorname{length}(f^n(\gamma)) <\varepsilon, \quad \forall n\in \mathbb Z. \] 
	For these diffeomorphisms, the center direction integrates to a continuous foliation $\mathcal F^c$ of $M$ ( \cite[Corollary $7.6$]{HHUSurvey}). In this context we are able to improve the previous result.
    
	\begin{maintheorem}\label{theorem:casobonatti}
		Let $f:M \to M$ be a transitive $C^1$ partially hyperbolic diffeomorphism with one-dimensional topological neutral center direction over a compact Riemannian manifold. If $f$ is ergodic with respect to a $f$-invariant probability measure $\mu$ with full support then one of the following holds:
		\begin{itemize}
			\item[a)] the disintegration of $\mu$ along  $\mathcal F^c$ is atomic.
			\item[b)] for almost every $x\in M$, the conditional measure on $\mathcal F^c(x)$ is equivalent to the measure $\lambda_{x}$ defined on simple arcs of $\mathcal F^c(x)$ by: \[\lambda_{x}(\gamma([0,1]))=l_{x}(\gamma), \; \text{where} \; \gamma \; \text{is a simple arc.}\]
		\end{itemize}
	\end{maintheorem}

	\subsection{Organization of the paper}
	
	In Section \ref{sec:preliminaries} we give some preliminaries on measure theory and disintegration of measures along a foliation. In Section \ref{sec:BorelLaminations} we introduce the definition of $\mathcal{F}$-arc length system with respect to a dynamics in $(M,\mathcal{A},\mu)$ and the construction of the measures induced by this system. In Section \ref{sec:FiberedSpaces} we prove several technical lemmas concerning the continuity/measurability of certain functions such as the evaluation of conditional measures on certain balls inside the leaves of $\mathcal F$. Finally, in Sections \ref{sec:DDL} and \ref{sec:dicotomia} we prove Theorem \ref{theorem:A} and Theorem \ref{theorem:casobonatti} respectively.

	\section{Basics on conditional measures} \label{sec:preliminaries}
	All along the paper $(M,\mathcal{A},\mu)$ will be a  probability space, where $M$ is a compact Riemannian manifold, with dimension at least two, $\mu$ is a non-atomic Borel measure and $\mathcal{A}$ is a completion of the Borel $\sigma$-algebra $\mathcal{B}$ of $M$ with respect to the measure $\mu$.\footnote{ We note that for simplicity we are using a slight abuse of notation by also denoting the extension of the measure $\mu$ on $\mathcal A$ as $\mu$.}. In other words, $(M,\mathcal{A},\mu)$ is measurably isomorphic to $([0,1], \mathcal A_{[0,1]}, \text{Leb}_{[0,1]})$ where $\text{Leb}_{[0,1]}$ is the  standard Lebesgue measure on $[0,1]$ and $\mathcal A_{[0,1]}$ is the $\sigma$-algebra of Lebesgue measurable sets of $[0,1]$. In particular, whenever we state that a certain subset of $M$ is measurable without mentioning the $\sigma$-algebra, it is implicitly understood that it is $\mathcal A$-measurable. We will denote by $\mu(\cdot |U)$ the restriction of $\mu$ to a subset $U\subset M$, that is, it denotes the measure given by: $\mu(\cdot |U)=\mu(U)^{-1} \cdot \mu(B\cap U)$. \\

	Given a sub-$\sigma$-algebra $\mathcal{E}\subset\mathcal{B}$ generated by a
	countable family $\{E_n\}_{n\in \mathbb N}$,  the \textit{atom} of $x$ is the set given by
	\[[x]:=\bigcap_{E\in \mathcal E: \; x\in E} E.\]
	Since $\{E_n\}_{n\in \mathbb N}$ generates $\mathcal E$ we may also write
	\[[x]=\bigcap_{x\in E_n} E_n.\]
	Consequently, $[x]$ is a Borel set for every $x\in M$ and $\{[x] : x\in M\}$ is a partition of $M$.
	
	
	Given $\mathcal{E}\subset \mathcal{B}$ a countably generated sub-$\sigma$-algebra, a family of measures $\{\mu_x\}_{x\in M}$ is called a \textit{system of conditional measures of $\mu$ associated to $\mathcal{E}$} if
	\begin{enumerate}
		\item[i)] the function $x\mapsto \mu_x(B)$ is $\mathcal E$-measurable in the sense that for every $B\in \mathcal{B}$, $x\to \mu_x(B)$ is $\mathcal{E}$-measurable,
		\item[ii)] for $\mu$ almost every $x\in X$, $\mu_x([x])=1$,
		\item[iii)] $\mu(B)=\int_{y\in B} \mu_y(B)d\mu(y)$.
	\end{enumerate}
	%
	As it is well known, every Borel measure $\mu$ admits a system of conditional measures with respect to any countably generated sub-$\sigma$ algebra $\mathcal E \subset \mathcal B$ and such system is essentially unique, see for example \cite[Theorem 5.14]{Einsiedler}.
	
	In our context it is usually convenient to consider systems of conditional measures along certain partitions by $C^1$ immersed submanifolds, as we detail in the sequel.
	
	We say that $\mathcal{F}$ is a \textit{continuous foliation of dimension $m$ by $C^1$-leaves} if  $\mathcal F = \{ \mathcal F(x)\}_{x \in M}$  is a partition of $M$  into $C^1$ immersed submanifolds of dimension $m$, such that for every $x\in M$ there exist open sets $U\subset M, \, V\subset \mathbb{R}^m, \, W\subset \mathbb{R}^{n-m}$ and  a homeomorphism $\varphi:U\to V\times W$, called a \textit{local chart}, such that for every $c\in W$ the set $\varphi^{-1}(V\times \{c\})$, which is called a \textit{plaque} of $\mathcal{F}$  is a connected component of $L\cap U$ for a certain $L\in \mathcal{F}$.  Given a foliation $\mathcal{F}$ of $M$, we denote by $\mathcal{F}(x)$ the element of $\mathcal{F}$ which contains $x$ and call such elements the \textit{leaves} of $\mathcal{F}$. 
	Whenever $U$ is a foliated chart, we denote by $\mathcal F|U$ the continuous foliation of $U$ given by plaques of $\mathcal F$ restricted to $U$.
	
	Given a one-dimensional continuous foliation $\mathcal F$ of $M$, consider a local chart of $\mathcal F$
	\[\varphi: U \to (0,1)\times B^{n-1}_1(0),\]
	we also call $U$ a foliated box. 
	
	Let $\{\widetilde{E}_{q,k}\}$ a countable collection of sets defined by 
	\[
	\widetilde{E}_{q,k}=(0,1)\times B(q,1/k)\subset\mathbb{R}\times \mathbb{R}^{n-1}, \quad q\in B^{n-1}_1(0)\cap\mathbb{Q}^{n-1}, \, k\in\mathbb{N}.
	\]
	Let $E_{q,k}=\varphi^{-1}(\widetilde{E}_{q,k})$ and $\mathcal{E}\subset\mathcal{B}$ the sub-$\sigma$-algebra generated by the family of Borelian sets $E_{q,k}$. Notice that for every $y\in M$ the atom, $[y]$ is the connected component of $\mathcal{F}(y)\cap U$ that contains $y$, in this case we also call the system of conditional measures of $\mu$ associated to $\mathcal E$, $\{\mu^U_y\}_{y\in U}$, \textit{the disintegration of $\mu$ along $\mathcal{F}$ restricted to $U$}. 
	We say that the disintegration of $\mu$ along $\mathcal F$ is \textit{atomic} if for any local chart $U$, for almost every $y\in U$ there exists $a(y)$ in the plaque $\mathcal F|U(y)$ with $\mu^U_y(a(y)) >0$.
	
	
	As we may observe, the disintegration of $\mu$ along $\mathcal F$ is always done inside a local chart. However, the following well known result states that if two local charts intersect, the disintegration of both local charts are equal except by a multiplicative constant when restricted to the intersection.
	
	\begin{proposition}(see for example \cite[Proposition 5.17]{El.Pisa}) \label{prop:disintegration.unbounded}
		If $U_1$ and $U_2$ are domains of two local charts $\varphi_{1}$ and $\varphi_{2}$ of $\mathcal F$, then for almost every $x$ the conditional measures $\mu_x^{U_1}$ and $\mu_x^{U_2}$ coincide up to a multiplicative constant on $U_1 \cap U_2$.
	\end{proposition}


    The above proposition provides a tool to extend a conditional measure (which is locally defined) to a measure on the full leaf that, when normalized on foliated charts, restores the disintegration. That is, assume $x$ is a point that belongs to two different foliated charts $U_1$ and $U_2$, and let $L_1$ and $L_2$ be the respective plaques that contain the point $x$. Since by the above proposition $\mu_x^{U_1}=\alpha \mu_x^{U_2}$ at the intersection of both charts, we can extend $\mu_x^{U_1}$ to $L_1 \cup L_2$ in such a way that on $L_2$ the extension of $\mu_x^{U_1}$ is $\alpha \mu_x^{U_2}$. This process can be done inductively so that $\mu_x^{U_1}$ is extended to the whole leaf $\mathcal F(x)$. Therefore, $\mathcal F(x)$ have many measures which are constructed in such a way but using different plaques to start from, and there is no canonical way to choose one of them, but this is not a problem as we work with a class of measures instead.
    
    As observed in \cite{AVW}, the property such as in Proposition \ref{prop:disintegration.unbounded}, allows us to define a family of classes of measures $\{x \in M: \Omega_x\}$, such that
	\begin{itemize}
	\item $\omega_x (M\setminus \mathcal F(x))=0$, for any representative $\omega_x$ of $\Omega_x$,
	    \item any two representatives of $\Omega_x$ are equal modulo multiplication by a constant
	    \item for any local chart $U$ of $M$ and $\{\mu^U_x\}_{x\in U}$ a disintegration of $\mu(\cdot |U)$ along $\mathcal F|U$, for almost every point $x\in U$ we have
	    \[\mu^U_x = \omega_x(\cdot | \mathcal F|U(x)),\]
	    where $\omega_x$ denotes a representative of $\Omega_x$.

	\end{itemize}

	The family $\{\Omega_x\}_{x\in M}$ will be called a disintegration of $\mu$ along $\mathcal F$. Notice that considering such a family $\{\Omega_x\}_{x\in M}$ an atom in a certain leaf $\mathcal F(x)$ is now a well-defined concept independent of the local chart since it is a point $a\in \mathcal F(x)$ for which $\Omega_x(\{a\})>0$. Hence, in the context of ergodic system, since the set of atoms is invariant is has zero or full measure, which implies that, in the later case almost every measure of the family will be a sum of Dirac measures.

	\section{Invariant arc-length and invariant metric systems} \label{sec:BorelLaminations}
	

	Given $f:M\to M$ and a foliation $\mathcal F$ of $M$, we say that $f$ \textit{preserves} $\mathcal F$, or that $\mathcal F$ \textit{is $f$-invariant} if for $x\in M$
	\[\mathcal F(f(x)) = f(\mathcal F(x)).\]
	From now on, $\mathcal{F}$ will denote a continuous and $f$-invariant one dimensional foliation.
	
	By a \textit{simple arc} $\gamma$ on a leaf $\mathcal F(x)$, we mean a $C^1$ curve $\gamma:[0,1]\to \mathcal F(x)$ for which $\gamma(t) \ne \gamma(s)$ for all $t\ne s$ with $(t,s)\notin \{(0,1), \; (1,0)\}$. In the space of simple arcs we define an equivalence relation by saying that $\gamma \sim \sigma$ if $\sigma$ is a reparametrization of $\gamma$. We say that a sequence of simple arcs $\gamma_n$ \textit{converges} to $\gamma$ (in the $C^0$-topology) if $\gamma_n$ converges pointwise to $\gamma$. By convention, by a \textit{degenerate arc} we mean a point.
	
	The following definition is inspired by the concept of center metric given in \cite{BonattiZhang}. 
	
	\begin{definition}\label{Flengthsystem}
		We will call $\{l_x\}$ a $\mathcal{F}$-arc length system, if for $x\in M$,  $l_x$ is defined on the simple arcs on $\mathcal{F}(x)$,  and  $l_x$ satisfies the following properties:
		\begin{enumerate}
			\item strictly positive on the non-degenerate arcs, and vanishing on degenerate arcs,
			\item $l_x(\gamma)=l_x(\sigma)$ if $\gamma \sim \sigma$,
			\item let $\gamma:[0,1]\to \mathcal{F}(x)$ be a simple arc and $a\in (0,1)$ then 
			\[
			l_x(\gamma[0,a])+l_x(\gamma[a,1])=l_x(\gamma[0,1]),
			\]
			\item let $\gamma:[0,1]\to \mathcal{F}(x)$ a simple arc, then 
			\[l_x(\gamma[0,1])=l_{f(x)}(f(\gamma[0,1])).\]
			\item given a sequence of simple arcs $\gamma_n :[0,1]\to \mathcal F(x_n)$, converging to a simple arc $\gamma:[0,1] \to \mathcal F(x)$,
\[l_{x_n}(\gamma_n) \rightarrow l_x(\gamma), \quad \text{as} \; n\rightarrow +\infty.\]	
		\end{enumerate}	
	\end{definition}
	
	In general, it is easy to give examples of systems preserving some continuous foliation of  dimension one $\mathcal F$ and admitting some $\mathcal F$-arc length system. \quad \\
	
	{\bf Examples:} \\
	\begin{itemize}
	\item[a)] Let $M=\mathbb T^d$, $d\geq 2$, and $L:\mathbb R^d \to \mathbb R^d$ a linear map given by a matrix with integer entries and for which $1$ is an eigenvalue. Let $v$ be an eigenvector associated to $1$ and take $E = \mathbb R\cdot v$. The linear map $L$ induces a linear function $f_L:\mathbb T^d \to \mathbb T^d$ and $E$ induced a foliation $\mathcal F$ of $\mathbb T^d$ which is one-dimensional and $f_L$-invariant. Clearly $f_L$ is an isometry along $\mathcal F$. In particular, the family of standard arc-lengths on the leaves of $\mathcal F$ constitute a $\mathcal F$-arc length system.
	For this case we obtain that any ergodic measure invariant by $f_L$ must have either atomic conditionals, conditionals supported on $\sigma$-Cantor subsets of the leaves or they must be equivalent to the Lebesgue measure on the leaves.
	
	
	\begin{question}
	Does there exists an ergodic measure $\mu$ preserved by some $f_L$ whose conditional measures are supported on a $\sigma$-Cantor subset of the leaves?
	\end{question}

	\item[b)] Let $\varphi:\mathbb R \times M \to M$ be any $C^1$ flow. The foliation $\mathcal F$ given by the orbits of $\varphi$ is a $\varphi_t$-invariant $C^1$-foliation of $M$ for any fixed $t\in \mathbb R$. There is a natural $\mathcal F$-arc length system in this case given by:
	\[l_x(\gamma) := |l|, \quad \text{with} \quad  \varphi(l,\gamma(0))=\gamma(1).\]
	Assume that almost every $x\in M$ is not a periodic point of $\varphi$.
	Given any $\varphi_t$-ergodic invariant measure $\mu$, it follows from \cite[Example 7.4]{Lindenstrauss} that the disintegration of $\mu$ along $\mathcal F$ is either Lebesgue or atomic.
	
	
	\item[c)] Some skew-products also provide interesting examples. For example take $f: \mathbb T^d \times S^1 \to \mathbb T^d \times S^1$ given by 
	\[f(x,y)=(g(x), R_{\alpha}(y)),\]
	where $g:\mathbb T^d \to \mathbb T^d$ is any homeomorphism and $R_{\alpha}:S^1 \to S^1$ denotes a rotation of angle $\alpha$. In this case, the foliation $\mathcal F$ whose leaves are $\{x\}\times S^1$, $x\in \mathbb T^d$, is $f$-invariant and by taking $l_x$ on $\{x\}\times S^1$ to be given by the usual arc length on $S^1$, we conclude that $\{l_x\}$ is a $\mathcal F$-arc length system.
	In this example it is easy to determine the measurable properties of $\mathcal F$ in the sense that, given a Borel $g$-invariant measure $\nu$, the measure $\nu \times \lambda_{S^1}$\footnote{Here we denote by $\lambda_{S^1}$ the standard Lebesgue measure on $S^1$} is $f$-invariant and, a direct application of the Fubbini Theorem shows that, the disintegration of $\mu$ along $\mathcal F$ has the Lebesgue measures $\lambda_{S^1}$ as its conditional measures. \\
	
	\item[d)]
	Another, more interesting case, is provided by recent results of Bonatti-Zhang \cite{BonattiZhang}. A $C^1$ diffeomorphism $f:M\to M$, on a compact Riemannian manifold $M$, is said to be partially hyperbolic if there is a nontrivial splitting 
\[TM=E^s\oplus E^c\oplus E^u\]
such that 
\[Df(x)E^{\tau}(x)=E^{\tau}(f(x)), \; \tau\in \{s,c,u\}\]
and a Riemannian metric for which there are continuous positive functions $\mu,\hat{\mu},\nu,\hat{\nu}, \gamma, \hat{\gamma}$ with
\[\nu(p),\hat{\nu}(p) <1, \quad \text{and} \quad \mu(p)<\nu(p)<\gamma(p)<\hat{\gamma}(p)^{-1}<\hat{\nu}(p)^{-1}<\hat{\mu}(p)^{-1},\]
such that for any vector $v\in T_pM$,
\[\mu(p)||v||<||Df(p)\cdot v|| < \nu(p)||v||, \; \text{if} \; v\in E^s(p)\]
\[\gamma(p)||v|| < ||Df(p)\cdot v|| < \hat{\gamma}(p)^{-1}||v||, \; \text{if} \; v\in E^c(p)\]
\[\hat{\nu}(p)^{-1}||v||< ||Df(p)\cdot v||<\hat{\mu}(p)^{-1}||v||, \; \text{if} \; v\in E^u(p).\]
We say that $f$ has topological neutral center if, for any $\varepsilon>0$, there exists $\delta>0$ for which: given any smooth curve $\gamma:[0,1]\to M$ with $\gamma'(t) \in E^c(\gamma(t))$, $0\leq t \leq 1$, if $\text{length}(\gamma) <\delta$ then $\text{length}(f^n(\gamma))<\varepsilon$, for all $n\in \mathbb Z$.
For partially hyperbolic diffeomorphisms with neutral center, the center distribution $E^c$ integrates to a $f$-invariant foliation $\mathcal F^c$ (see \cite[Corollary 7.6]{HHUSurvey}) called center foliation of $f$.
In \cite{BonattiZhang} the authors proved that if $f:M \to M$ is a $C^1$ partially hyperbolic diffeomorphism with neutral center direction, then $f$ admits a continuous $\mathcal F$-arc length system. To understand the measurable properties of the center foliation preserved by such maps, were one of the motivations of this work. As a consequence of our results, the disintegration of any $f$-invariant ergodic probability measure of such maps falls in three possible cases. When the conditional measures have full support, the second author proves in \cite{Ponce2} the occurrence of an invariance principle. Further, if the measure is smooth, full support of the conditional measures imply the Bernoulli property for $f$. If, moreover, $f$ is locally accessible, then $\mathcal F^c$ is as regular as the map $f$ itself. 
	\end{itemize}
	
	\begin{definition}
		Given $y, z, w\in \mathcal{F}(x)$ we say that $y$ is between $z$ and $w$, if there exists a simple arc $\gamma:[0,1]\to \mathcal{F}(x)$ such that $\gamma(0)=z$, $\gamma(1)=w$, $\gamma(t)=y$ for some $t\in (0,1)$, and \[l_{x}(\gamma)=\min\{l_{x}(\alpha) \, : \, \alpha:[0,1]\to \mathcal{F}(x), \, \alpha(0)=z, \, \alpha(1)=w \}.\]
	\end{definition}

	\begin{definition}\label{defi:Fmetric}
		Let $\{l_x\}$ be a $\mathcal{F}$-arc length system. For $x\in M$ we define a metric $d_x$ on $\mathcal{F}(x)$ by
		\[d_x(y,z):=\min \{l_{x}(\gamma) \, : \, \gamma:[0,1]\to \mathcal{F}(x) \; \text{is simple with} \; \, \gamma(0)=y, \, \gamma(1)=z\}.\]
		We call the family $\{d_x\}_x$ the $\mathcal{F}$-metric system associated to the $\mathcal F$-arc length system $\{l_x\}_x$.
	\end{definition}
   \begin{remark}\label{metricsystem}
       By the definition of $d_x$, it is evident that $d_x$ is an additive metric. Specifically, for any $y, z, w \in \mathcal{F}(x)$ such that $y$ lies between $z$ and $w$, the following holds:  
\[
d_x(z, w) = d_x(z, y) + d_x(y, w).
\]  
Moreover, the family $\{d_x\}$ is invariant under $f$, in the sense that  
\[
d_{f(x)}(f(z), f(y)) = d_x(z, y).
\]

It is not true that $\{d_x\}_{x\in M}$ is continuous in the sense that we may have sequences $x_n \rightarrow x$, $y_n \rightarrow y$, with $y_n \in \mathcal F(x_n)$, $y\in \mathcal F(x)$ but $d_{x_n}(x_n,y_n) \nrightarrow d_{x}(x,y)$. Indeed this happens, for example, for compact foliations where the leaves do not have uniformly bounded length. It is true, however, that this family of metrics are continuous when restricted to plaques inside local charts. We make this property more precise below.
  \end{remark}
 \begin{definition}
Consider $\mathcal F$ a continuous foliation of $M$. A function $F:\bigcup_{x\in M} \mathcal F(x) \times \mathcal F(x) \to [0,\infty)$ will be called plaque-continuous if given any $p\in M$, there exists a local chart $p\in U$ of $\mathcal F$, such that for any sequences  $x_n \rightarrow x$, $y_n \rightarrow y$ with $y_n \in \mathcal F|U(x_n)$, $x\in U$ and $y \in \mathcal F|U(x)$, we have
\[\lim_{n\rightarrow \infty} F(x_n,y_n) = F(x,y).\]
Any such local chart $U$ will be called a continuity-domain of $F$.
 \end{definition}
 
 \begin{definition}
We say that a family of metrics $\{d_x: x\in M\}$, each $d_x$ defined on $\mathcal F(x)$, is plaque-continuous if $F:\bigcup_{x\in M} \mathcal F(x) \times \mathcal F(x) \to [0,\infty)$ defined by 
\[F(x,y):=d_x(x,y),\]
is plaque continuous. In this case if $U$ is a continuity-domain of $F$ we will also say that $U$ is a continuity-domain of $\{d_x\}$.
 \end{definition}
 
 \begin{proposition}
 The metric system given in Definition \ref{defi:Fmetric} is plaque-continuous.
 \end{proposition}
 \begin{proof}
 Let  $\varphi:U \to (0,1)\times V \subset \mathbb R^n$ be a local chart of $\mathcal F$ where $\varphi^{-1}((0,1) \times \{c\})$, $c\in V$, are the plaques of $\mathcal F$ in $U$. For any $p\in U$, consider $\xi: W\subset U \to (0,1)\times K\subset \mathbb R^n$ another local chart centered in $p$ such that for any 
 \[z\in W \Rightarrow l_z(\mathcal F|U(z) ) > 3\cdot l_z(\mathcal F|W(z)).\]
 This can be done by the continuity of $\{l_x\}$.  In particular, for any $x\in W, y\in \mathcal F|W(x) = \xi^{-1}((0,1),c')$, the simple curve $\gamma(t) = \xi^{-1}((1-t)\xi(x)+t\xi(y), c' )$ minimizes the $l_x$-length connecting $x$ and $y$, that is, $d_x(x,y) = l_x(\gamma)$.
 
 On that account, consider $x\in W, y\in \mathcal F|W(x) = \xi^{-1}((0,1),c')$ and sequences $x_n \in W, y_n \in \mathcal F|W(x_n) = \xi^{-1}((0,1),c_n)$ with $x_n \rightarrow x$ and $y_n \rightarrow y$. Let $\gamma$ be defined as in the previous paragraph and $\gamma_n(t):=\xi^{-1}((1-t)\xi(x_n)+t\xi(y_n), c_n )$. By the convergence of the sequences we have $\gamma_n \rightarrow \gamma$. But by the previous discussion on the choice of the local chart $W$ we have
 \[d_{x_n}(x_n,y_n) = l_{x_n}(\gamma_n) \quad \text{and} \quad d_x(x,y) = l_x(\gamma).\]
 We then conclude by the continuity of $\{l_x\}$ that $\lim_{n\rightarrow \infty} d_{x_n}(x_n,y_n) =\lim_{n\rightarrow \infty}  l_{x_n}(\gamma_n)= l_x(\gamma) = d_x(x,y)$.
 
Therefore $\{d_x\}$ is plaque-continuous as we wanted to show.
 \end{proof}

	\begin{lemma}\label{lemma:opencont}
	    Given any local open transversal $T$ to $\mathcal F$, for any $r$ small enough, the set
	    \[S:=\bigcup_{x\in T}B_{d_x}(x,r)\]
	    is open.
	\end{lemma}
	\begin{proof}
	Assume that $T$ is a local transversal associated to a local chart $(U,\varphi)$. Let $r>0$ small enough we have $B_{d_x}(x,r)\subset U$ for every  $x\in T$. In particular $U\setminus \overline{T}$ has two open connected components $U_1$ and $U_2$ with $\overline{U_1}\cap \overline{U_2} = \overline{T}$.
	
	Since $U$ is a local chart, we may consider an orientation on the $\mathcal F|U$-plaques. Assume that $S$ is not open. Then, there exists $y\in S$ and a sequence $y_k \notin S$, with $y_k\rightarrow y$.
	
	Consider $x\in T$ such that $y\in B_{d_x}(x,r)$ and denote by $\varphi$ the flow on the $\mathcal F|U$-plaques induced by the orientation fixed before and such that
	\[d_p(\varphi_t(p), p)=|t|,\]
	whenever $\varphi_t(p)$ is defined.
	Let $t_0$ be such that $x=\varphi_{t_0}(y)$. As $y\in B_{d_x}(x,r)$, there exists $\delta>0$ for which
	\[\varphi_t(y)\in S, \quad t\in [t_0-\delta, t_0+\delta].\]
	Now, by the plaque continuity and the fact that $y_k\rightarrow y$, we have
	\[\varphi_{t_0-\delta}(y_k)\rightarrow \varphi_{t_0-\delta}(y), \quad \varphi_{t_0+\delta}(y_k)\rightarrow \varphi_{t_0+\delta}(y).\]
	Observe that $\varphi_{t_0-\delta}(y)$ and $\varphi_{t_0+\delta}(y)$ belong to different connected components, thus, for $k$ large enought the same happens for $\varphi_{t_0-\delta}(y_k)$ and $\varphi_{t_0+\delta}(y_k)$. Since $\gamma_k:=\{\varphi_{t}(y_k): \quad t\in [t_0-\delta, t_0+\delta]\}$ is an arc with points in the interior and in the exterior of $U_1$, it must intersect its boundary, namely $T$. This implies that $y_k\in S$ for large $k$, yielding a contradition.
	
	That is, $S$ is open as we wanted to show.

	
	\end{proof}

	\begin{proposition}\label{Brcontida}
	Given a finite open cover $\mathcal U$ of $M$ by local charts of $\mathcal F$, there exists $\mathfrak r>0$ such that for all $x\in M$, there is $U\in \mathcal U$ with
	\[B_{d_x}(x,\mathfrak r) \subset U.\]
	\end{proposition}
	\begin{proof}
	    For each $x\in M$, take any $U_x\in \mathcal U$ with $x\in U_x$. There exists $r_x>0$ for which $B_{d_x}(x,r_x) \subset U_x$. By plaque continuity of $\{d_x\}$ there exists a neighborhood $x\in V_x \subset U_x$ for which
	    \[y\in V_x \Rightarrow B_{d_y}(y,r_x) \subset U_x.\]
	    Since $M$ is compact we may cover $M$ with a finite number of neighborhoods $V_{x_i}$, $1\leq i \leq l$. Take $\mathfrak r = \min \{r_{x_i}: 1\leq i \leq l \}$.
	\end{proof}
	
	In the sequel we will prove a technical result which will be used along the proof of the main theorem. Namely, we prove that the continuous translation of a measurable set along the foliation $\mathcal F$ is also a measurable set. 
	
	\begin{lemma}\label{conjuntofluxo} 
	There exists $t_0>0$ such that for every $0\leq t\leq t_0$ and every  Borel subset $A\subset M$ the set 
		\begin{equation}
			\Phi_t(A):=\{x\in M : d_x(x,A)<t\},
		\end{equation}
		is a measurable set.
	\end{lemma}
	\begin{proof} 
	Let $\mathcal U$ be a finite cover of $M$ by local charts which are continuity-domains of $\{d_x\}$. Consider $\mathfrak r$ the number given by Proposition \ref{Brcontida}. In particular the family $\{U_{\mathfrak r/2} : U\in \mathcal U\}$,
	defined by
	\[U_{\mathfrak r/2} = \{x\in U : d_x(x,\partial U)\geq \mathfrak r/2\},\]
	is still a cover of $M$. Let $A\subset M$ be a Borel subset. Observe that
	\[\Phi_{t}(A\cap U_{\mathfrak r/2}) \subset U, \quad U\in \mathcal U, \quad t<\mathfrak r/2.\]
		
		We will prove that for $U\in \mathcal U$, the subset $\Phi_t(A \cap U_{\mathfrak r/2})$ is measurable. 
		
		Let $\varphi_U:U\to B_1^{n-1}(0)\times (0,1)$ be a local chart of $\mathcal F$, and inside $U$ consider the orientation in the plaques $\mathcal{F}(x)$ induced by the orientation in the line segments of the form $\{\overline{x}\}\times (0,1)\subset \mathbb R^{n-1}\times \mathbb R$. This orientation induces, at each plaque, an order relation which we will denote by $\prec$ (the plaque being implicit in the context).
		
		Now for $s\in [-t,t]$, with $0\leq t< \mathfrak r/2$ fixed, we define  $\phi_s^U:U_{\mathfrak r/2}\to U$ by:
		\begin{itemize}
			\item for $s>0$, $\phi^U_s(x)$ is the only point of the plaque $\mathcal{F}|U(x)$ such that $d_x(x,\phi_s^U(x))=s$ and $x\prec \phi_s^U(x)$;
			\item for $s<0$, $\phi_{s}^U(x)$ is the unique point of the plaque $\mathcal{F}|U(x)$ such that $d_x(x,\phi_s^U(x))=-s$ and $\phi_s^U(x)\prec x$.
		\end{itemize}
		
		Observe that $\phi^U_s$ is continuous for every $|s|<t$ since $U$ is a continuity-domain of $\{d_x\}$ and, consequently, it is a homeomorphism. Thus $\phi_s^U(A\cap U_{\mathfrak r/2})$ is a measurable subset of $M$ for every $s\in [-t,t]$. 
		
		Now, for each $1\leq i \leq n$ take  
		\[
		\Phi_t^U(A):=\bigcup_{\substack{q\in\mathbb{Q} \\ q<t}}\phi_q^U(A\cap U_{\mathfrak r/2}), \quad 0\leq t <\mathfrak r/2.
		\]
		Notice that $\Phi_t^U(A)$ is a measurable set, since each set in the countable union is measurable as we have proved before. Consequently,
		\[
		\Phi_t(A)=\bigcup_{U\in \mathcal U}\Phi_t^U(A),
		\]
		is a measurable set, as we wanted to show.
	\end{proof}
	
	\begin{definition}\label{def.lambdax}
		Let $\{l_x\}$ be a $\mathcal{F}$-arc length system. Then, we have a well defined homeomorphism
		\[h_x: \mathcal F(x) \to F,\]
		where $F=\mathbb R$ or $F=S^1$, $h_x(x)=0$\footnote{Here we are using the identification $S^1=[0,1]/\sim$ where $0\sim1$, thus the point $0$ stands for the equivalence class of $0$ in $S^1$.}, and such that, for any simple arc $\gamma:[0,1]\to \mathcal F(x)$ we have
		\[l_x(\gamma[0,1]) = \lambda(h_x(\gamma[0,1])),\]
		where $\lambda$ denotes the Lebesgue measure on $F$. In particular $\lambda(h_x(\gamma[0,1]))$ is the size of the interval $h_x(\gamma[0,1])$.
		We now define the measure $\lambda_x$ on $\mathcal F(x)$ given by:
		\[\lambda_x = (h_x^{-1})_*\lambda.\]
		
	\end{definition}
	Note that if $\gamma[0,1]$ is a simple arc in $\mathcal F(x)$ then,
	\[\lambda_x(\gamma[0,1]) =\lambda(h_x(\gamma[0,1])) = l_x(\gamma[0,1]). \]
	Consequently, the measure $\lambda_{x}$ is a doubling measure\footnote{
		Recall that given a metric space $(X,d)$, a measure $\nu$ on $X$ is said to be a \textbf{doubling measure} if there exists a constant $\Omega>0$ such that for any $x\in X$ and any $r>0$ we have
		\[\nu(B(x,2r)) \leq \Omega \cdot \nu(B(x,r)).\]}.



	\section{Properties of non-atomic disintegrations over a continuous one-dimensional foliation} \label{sec:FiberedSpaces}
	
	The proof of the main result of this paper follows from understanding the topological structure of $\text{supp} \; \mu_x^U$, for a disintegration $\{\mu^U_x\}_{x\in U}$ of $\mu(\cdot |U)$ of $\mu$ on a local chart $U$. To this end we first need to understand the behavior, in terms of measure theory, of the map
	\[(x,r) \mapsto \mu^U_x(B_{d_x}(x,r)),\]
	defined for a certain subset of $U\times \mathbb R$. This is the goal of this Section.
	

	Along the rest of the paper we assume the following:
	\begin{itemize}
		\item $\mathcal F$ is a $f$-invariant one dimensional continuous foliation,
		\item $\mathcal U$ is a finite cover of $M$ by local charts $U$ of $\mathcal F$ such that $\overline{U}$ is still inside a local chart of $\mathcal F$,
		\item each $U\in \mathcal U$ is a continuity-domain of $\{d_x\}$,
		\item for each $U\in \mathcal U$, $\{\mu^U_x\}$ is a disintegration of $\mu(\cdot |U)$ along the plaques $\mathcal F|U$,
		\item the disintegration of $\mu$ along $\mathcal F$ is not atomic, in particular, for each $U\in \mathcal U$ there exists a subset $\mathcal A_U \subset U$ with $\mu(\mathcal A_U)=0$ and for which
		\[x\notin \mathcal A_U \Rightarrow \mu^U_x \; \text{is not atomic},\]
		\item $\mathfrak r>0$ is any constant given by Proposition \ref{Brcontida},
		\item $\{\Omega_x\}_{x\in M}$ a disintegration of $\mu$ along $\mathcal F$ and we denote $\omega_x$ any representative of $\Omega_x$, $x\in M$.
	\end{itemize}
	
	We also fix the following notation: for any subset $X\subset M$, we denote by $\mathcal B_X$ the Borel $\sigma$-algebra of $X$ given by the topology induced by that of $M$. It is important to observe that, by definition, for any $U\in \mathcal U$, the set $\mathcal A_U$ is $\mathcal F|U$-saturated in $U$.

    \begin{lemma} \label{lemma:continuousinleaf}
		For each $0<r<\mathfrak r$, $U\in \mathcal U$, $J\subset M$ Borel and $x\in U \setminus \mathcal A_U$, the map
		\[y \mapsto \mu^U_x(J\cap B_{d_x}(y,r)),\]
		is continuous when restricted to the subset $F_{r} \subset \mathcal F|U(x)$ given by
		\[F_r(x) =\{y \in \mathcal F|U(x) : \; B_{d_x}(y,r) \subset U\}. \]
        In particular, for each $0<r<\mathfrak r$, $U\in \mathcal U$ and $x\in U \setminus \mathcal A_U$, the map
		\[y \mapsto \mu^U_x(B_{d_x}(y,r)),\]
		is continuous when restricted to $F_{r} \subset \mathcal F|U(x)$.
	\end{lemma}
	\begin{proof}
		Take  $x\in U$, $U\in \mathcal U$ and $J\subset M$ as in the statement. Let $y_n \rightarrow y$, $y_n, y \in F_r(x)$. We want to show that for $r>0$
		\[\lim_{n\rightarrow \infty} \mu^U_x(J\cap B_{d_{x}}(y_n,r)) = \mu^U_x(J\cap B_{d_x}(y,r)). \]
		Given any $k\in \mathbb N$, since $\mu^U_x$ is not atomic, we have that 	
		\begin{eqnarray*}
			\mu^U_x(\partial B_{ d_x}(y,r))  =0 \; \text{and} \;  \mu^U_x(\partial B_{d_x}(y_n,r))  =0, \forall n \in \mathbb N,
		\end{eqnarray*}
		where $\partial B_{d_x}$ denotes the boundary of the set inside the leaf $\mathcal F(x)$.
		Now, let $B_n:=(J\cap B_{d_x}(y_n,r)) \Delta (J\cap B_{d_x}(y,r))$ where $Y\Delta Z$ denotes the symmetric difference of the sets $Y$ and $Z$. From standard measure theory:
		\[ \limsup_{n\rightarrow \infty}  \mu^U_x(B_n) \leq \mu^U_x \left( \limsup_{n \rightarrow \infty} B_n \right).\]
		Thus
		\begin{align*}
			\limsup_{n\rightarrow \infty}  \mu^U_x(B_n) & \leq  \mu^U_x \left( \bigcap_{m=1}^{\infty} \bigcup_{n\geq m} B_n \right).\end{align*}
		We know that $z\in \bigcap_{m=1}^{\infty} \bigcup_{n\geq m} B_n $ if, and only if, there exists $n_1<n_2<n_3< \ldots$ such that $z \in B_{n_1}\cap B_{n_2} \cap \ldots$. Therefore, for each $i$ either:
        \begin{itemize}
            \item $z \in (J\cap B_{d_x}(y_{n_i},r)) \setminus (J\cap B_{d_x}(y,r)) \Rightarrow z \in J, z\in B_{d_x}(y_{n_i},r)$ and $z\notin B_{d_x}(y,r)$
            \item $z \in (J\cap B_{d_x}(y,r)) \setminus (J\cap B_{d_x}(y_{n_i},r)) \Rightarrow z \in J, z\in B_{d_x}(y,r)$ and $z\notin B_{d_x}(y_{n_i},r)$
        \end{itemize}
        Observe that $z$ cannot fall in the second case infinitely often since $\bigcap_i (B_{d_x}(y,r)\setminus B_{d_x}(y_{n_i},r)) = \emptyset$. Therefore, passing to a subsequence if necessary we have 
        \[ z \in J, z\in B_{d_x}(y_{n_i},r)\quad \text{and} \quad z\notin B_{d_x}(y,r)\]
        which implies, by taking $i \rightarrow \infty$, that $z\in J\cap \partial B_{d_x}(y,r) \subset \partial B_{d_x}(y,r)$. Consequently,
        \[\limsup_{n\rightarrow \infty}  \mu^U_x(B_n)
		\leq \mu^U_x(\partial B_{d_x}(y,r)) = 0.\]
        Therefore 
		\[\lim_{n\rightarrow \infty} \mu^U_x (J\cap B_{d_x}(y,r) \setminus J\cap B_{d_x}(y_n,r)) = \lim_{n\rightarrow \infty} \mu^U_x (J\cap B_{d_x}(y_n,r) \setminus J\cap B_{d_x}(y,r))= 0\] 
		and consequently 
		\begin{align*}
		    \lim_{n\rightarrow \infty} \mu^U_x (J\cap B_{d_x}(y_n,r)) & =  \lim_{n\rightarrow \infty}( \mu^U_x(J\cap B_{d_x}(y_n,r) \setminus J\cap B_{d_x}(y,r)) +  \mu^U_x (J\cap B_{d_x}(y,r)) \\
            & - \mu^U_x(J\cap B_{d_x}(y,r) \setminus J\cap B_{d_x}(y_n,r)) = \mu^U_x (J \cap B_{d_x}(y,r)),
		\end{align*}
		as we wanted to show.
        The second claim is a consequence of the first by taking $J=M$.
	\end{proof}

    \begin{proposition} \label{prop:unbounded.measurable2}
	Let $U\in \mathcal U$ and $0<r<\mathfrak r$. Consider $(V,\varphi)$ a local chart inside $U$ such that
		\[x\in V \Rightarrow B_{d_x}(x,r) \subset U,\]
        and $J\subset M$ a Borel subset of $M$.
		Then, restricted to $V\setminus \mathcal A_U$ the map
		\[x\mapsto \mu^U_x(J\cap B_{d_x}(x,r)),\]
		is $\mathcal B_{V\setminus \mathcal A_U}$-measurable, and consequently $\mathcal B_{U\setminus \mathcal A_U}$-measurable as $V\subset U$.
        In particular, the map 
        \[x \in V\setminus \mathcal A_U\mapsto \mu^U_x(B_{d_x}(x,r)),\]
		is $\mathcal B_{U\setminus \mathcal A_U}$-measurable.
	\end{proposition}
	\begin{proof}

		If $x$ and $y$ belong to the same $\mathcal F$-plaque in $U$ then $\mu^U_x=\mu^U_y$ . We already know that for all Borel subset $W\subset M$
		\[x\in V \mapsto \mu^U_x(W \cap U),\] is Borel measurable.
		
		Consider the local chart be given by the homeomorphism $\varphi:V \to (0,1)\times G$, where $G\subset \mathbb R^{n-1}$ is an open subset. Setting $g_r:V \to [0,\infty)$ as
		\[g_r(x) = \mu^U_x(J\cap B_{d_x}(x,r)),\]
		we have
        \[g_r \circ \varphi^{-1}(x_1,x_2) = \mu^U_{\varphi^{-1}(x_1,x_2)}(J\cap B_{d_{\varphi^{-1}(x_1,x_2)}}(\varphi^{-1}(x_1,x_2),r)).\]
		Let us prove that this function is continuous in $x_1$ and Borel measurable in $x_2$.
		
		By Lemma \ref{lemma:continuousinleaf},  restricted to $V\setminus \mathcal A_U$ we already have the continuity of $g_r\circ \varphi^{-1}$ the first coordinate, since the second coordinate being fixed means we are evaluating the function on a single plaque where the conditional measure is non-atomic. Now, fix the first coordinate $x_1$ and consider the transversal $T=\{x_1\} \times B_1^{n-1}(0)$. By Lemma \ref{lemma:opencont},
		the set
		\[S:=\bigcup_{x\in \varphi^{-1}(T)}B_{d_x}(x,r),\]
		is an open subset of $M$ and consequently $S\cap J =\bigcup_{x\in \varphi^{-1}(T)}J\cap B_{d_x}(x,r) $ is Borel. Thus, $y \mapsto \mu^U_y(S\cap J)$ is a Borel measurable function in $V$, which implies that $g_r\circ \varphi^{-1}(x_1, \cdot)$ is $\mathcal B_T$-measurable. In particular, its restriction to $T\cap \varphi(V\setminus \mathcal A_U) = T\setminus \varphi(\mathcal A_U)$ is a $\mathcal B_{T\setminus \varphi(\mathcal A_U)}$-measurable map. But observe that for $x\in T$ we have
		\[\mu^U_x(S\cap J) = \mu^U_x(J\cap B_{d_x}(x,r)).\]
		Therefore, for $x_1$ fixed the map $x_2 \in G \setminus \pi_2(\varphi(\mathcal A_U)) \mapsto \mu^U_{\varphi^{-1}(x_1,x_2)}(J\cap B_{d_{\varphi^{-1}(x_1,x_2)}}(\varphi^{-1}(x_1,x_2),r))$ is $\mathcal B_{G \setminus \pi_2(\varphi(\mathcal A_U))}$-measurable, where $\pi_2:(0,1)\times G \mapsto G$ is the projection onto the second coordinate.
		
		Consequently, $g_r\circ \varphi^{-1}$ restricted to $((0,1)\times G)\setminus \varphi(\mathcal A_U)$ is a jointly measurable function with respect to the product $\sigma$-algebra $\mathcal B_{(0,1)} \times \mathcal B_{G \setminus \pi_2(\varphi(\mathcal A_U))}$ (see for example \cite[Lemma 4.51]{InfDimAna}). As $\varphi$ is a homeomorphism, we conclude that $g_r$ is $\mathcal B_{V\setminus \mathcal A_U}$-measurable, as we wanted to show.		
	\end{proof}
		
	In the following Lemma, we prove that the subset of $M$ consisting of all points $x\in M$ for which there is a ball in $\mathcal{F}(x)$ with null $\mu^U_x$ measure, is a relatively Borel set.
	
	\begin{lemma}\label{lemma:defiZ}
		For each $U\in \mathcal U$, the set
		\[\mathcal Z_U = \bigcup_{x\in U\setminus \mathcal A_U} \mathcal F|U(x) \setminus \text{supp} \; \mu^U_x,\]
		
is $\mathcal B_{U\setminus \mathcal A_U}$-measurable set, meaning $\mathcal{Z}_U=V\cap (U\smallsetminus\mathcal{A}_U)$ for some open set $V$.
	
	\end{lemma}
	\begin{proof}
		First let us give a better formulation for the definition of $\mathcal Z_U$. Observe that 
		\[\mathcal Z_U =\{x \in U \setminus \mathcal A_U : \mu^U_x(I)=0 \; \text{for some open ball} \; x\in I\subset \mathcal F(x)\}.\]
		Consider $\{q_1,q_2,\ldots \}$ an enumeration of $\mathbb Q \cap [0,1]$ and let $\mathcal U$ be the given finite family of local charts covering $M$. For each $U \in \mathcal U$ and $i\in \mathbb N$, define $\phi_i^{U}:U_i\setminus \mathcal A_U\to \mathbb{R}$ by \[\phi_i^{U}(x)=\mu^U_{x}(B_{d_x}(x,q_i)),\]
		where $U_i=\{x\in U:B_{d_x}(x,q_i) \subset U\} $. Observe that we may cover $U_i$ with a countable number of local charts $V_i^j\subset U_i$, $j\in \mathbb N$ and, by Proposition \ref{prop:unbounded.measurable2}, $\phi_i^{U} | V_i^j$ is $\mathcal B_{V_i^j\setminus \mathcal A_U}$-measurable for every $j$. In particular $\phi_i^U$ is $\mathcal B_{U_i\setminus \mathcal A_U}$-measurable for every $i$.
On that account we have that $\mathcal{Z}_i^{U}:=(\phi_i^{U})^{-1}(\{0\})\subset M$ is a $\mathcal B_{U_i\setminus \mathcal A_U}$-measurable subset, in particular, a $\mathcal B_{U\setminus \mathcal A_U}$-measurable subset.
		It is not difficult to see that
		\begin{equation}\label{conjunto0}
			\mathcal{Z}_U=\bigcup_{i=1}^\infty \mathcal{Z}_i^{U}.
		\end{equation} 
		Therefore $\mathcal{Z}_U$ is a $\mathcal B_{U\setminus \mathcal A_U}$-measurable subset as we wanted to show. Moreover, $\mu(\mathcal{Z}_U)=0$.
	\end{proof}

    Consider $\mathcal P$ the set given by
    \[\mathcal P = \{x: \; \exists U,V \in \mathcal U, \; x\in U\cap V, \; \mu^U_x (\cdot | U\cap V) \nsim \mu^V_x (\cdot | U\cap V)\},\]
    that is, $\mathcal P$ is the set of points $x$ for which there exists two local charts $U$ and $V$ in $\mathcal U$, both containing $x$, where the respective conditional measures at the plaque of $x$, $\mu^U_x$ and $\mu^V_x$, are not equivalent on the intersection $\mathcal F|U(x) \cap \mathcal F|V(x)$. In particular this set has zero measure by Proposition \ref{prop:disintegration.unbounded}.
	Set $\widetilde{M}:=M\setminus \left(\bigcup_{U\in \mathcal U} (\mathcal Z_U \cup \mathcal A_U) \cup \mathcal P\right)$. 
	Let 
	\begin{equation}\label{defiM0}
    M_0:= \bigcap_{n\in \mathbb Z}f^n(\widetilde{M}).\end{equation}
	As $\mu(\widetilde{M})=1$, we have $\mu(M_0)=1$.
	For each $x\in M_0$, we will denote by $\mu_x$ the measure on $B_{d_x}(x,\mathfrak r)$ given by the conditional measure $\mu^U_x$, for some $U\in \mathcal U$ with $x\in U$, normalized to give weight exactly one to $B_{d_x}(x,\mathfrak r)$, 
	that is, for a measurable $F\subset \mathcal F|U(x)$ 
	\begin{equation}\label{eq:defimux}\mu_x(F) = \mu^U_x(F | B_{d_x}(x,\mathfrak r)).\end{equation}
	
	
	Given any $y\in B_{d_x}(x,\mathfrak r)\cap M_0$, the measures $\mu_y$ and $\mu_x$ are proportional to each other at the intersection $B_{d_x}(x,\mathfrak r) \cap B_{d_y}(y,\mathfrak r)$ by Proposition \ref{prop:disintegration.unbounded}, that is, there exists a constant $\beta$ for which $\mu_y = \beta \cdot \mu_x$ restricted to $B_{d_x}(x,\mathfrak r) \cap B_{d_y}(y,\mathfrak r)$.

	
	In particular, evaluating both sides at $B_{d_x}(x,\mathfrak r) \cap B_{d_x}(y,\mathfrak r)$ we see that
	\[\beta \cdot \mu_x(B_{d_x}(x,\mathfrak r) \cap B_{d_x}(y,\mathfrak r)) = \mu_y(B_{d_x}(x,\mathfrak r) \cap B_{d_x}(y,\mathfrak r))\]
	\[\Rightarrow \beta=\frac{\mu_y(B_{d_x}(x,\mathfrak r) \cap B_{d_x}(y,\mathfrak r))}{\mu_x(B_{d_x}(x,\mathfrak r) \cap B_{d_x}(y,\mathfrak r))}.\]

    \begin{corollary} \label{cor:continuanafolha}
		For
		each $0<r<\mathfrak r$, $x\in M_0$ and $J\subset M$ Borel, the map
		\[
		y \in B_{d_x}(x,\mathfrak r)\cap M_0 \mapsto \mu_y(J \cap B_{d_x}(y,r)) ,
		\]
		is continuous.
	\end{corollary}
	\begin{proof} For a certain $0<r<\mathfrak r$ fixed, take any $x \in M_0$. Let $y\in B_{d_x}(x,\mathfrak r) \cap M_0$ and $U \in \mathcal U$ with $B_{d_x}(y,\mathfrak r) \subset U$, take $y_n \in B_{d_x}(x,\mathfrak r)\cap M_0$ with $y_n \rightarrow y$ as $n\rightarrow \infty$ and $B_{d_x}(y_n,\mathfrak r) \subset U$. 
		
		By definition,
		\[\mu_y = \mu_y^U(\cdot |B_{d_x}(y,\mathfrak r)), \; \mu_{y_n} = \mu_{y_n}^U(\cdot |B_{d_x}(y_n,\mathfrak r)). \]
		Therefore for $n\in \mathbb N$ such that $B_{d_x}(y,r)\subset B_{d_x}(y_n,\mathfrak r)$ and $B_{d_x}(y_n,r)\subset B_{d_x}(y,\mathfrak r)$, 
		\begin{equation}\label{eq:mexe}
		\mu_{y_n}(J\cap B_{d_x}(y_n,r)) = \frac{\mu_{y_n}^U(J\cap B_{d_x}(y_n,r))}{\mu_{y_n}^U(B_{d_x}(y_n,\mathfrak r))} = \frac{\mu_y^U(J\cap B_{d_x}(y_n,r))}{\mu_{y}^U(B_{d_x}(y_n,\mathfrak r))}.\end{equation}
		By Lemma \ref{lemma:continuousinleaf} we have $\mu_y^U(J\cap B_{d_x}(y_n,r))\rightarrow \mu_y^U(J\cap B_{d_x}(y,r))$ and $\mu_{y}^U( B_{d_x}(y_n,\mathfrak r)) \rightarrow \mu_{y}^U(B_{d_x}(y,\mathfrak r))$ as $n\rightarrow \infty$. Therefore $\mu_{y_n}(J\cap B_{d_x}(y_n,r))\rightarrow \mu_{y}(J\cap B_{d_x}(y,r))$ as we wanted to show.
	\end{proof}

\begin{corollary} \label{cor:jointlymeasurable}
	  For each $x\in M_0$, and $J\subset M$ Borel, the map
		\[r \in [0,\mathfrak r] \mapsto \mu_x(J\cap B_{d_x}(x,r)),\]
		is continuous. 
		Furthermore the map
		\begin{equation}\label{eq:primafunc1}
        (x,r) \in M_0 \times [0,\mathfrak r] \mapsto \mu_x(J\cap B_{d_x}(x,r)),\end{equation}
		is jointly measurable. In particular
        \begin{equation}\label{eq:primafunc}(x,r) \in M_0 \times [0,\mathfrak r] \mapsto \mu_x(B_{d_x}(x,r)),\end{equation}
        is jointly measurable.
	\end{corollary}
	\begin{proof}
		Let $x\in M_0$, first, let us prove that $r \in [0,\mathfrak r] \mapsto \mu_x(J\cap B_{d_x}(x,r))$ is a continuous function. Let $0=r<\mathfrak r$ and $r_n \in [0,\mathfrak r] \searrow r$ (if $r=\mathfrak r$ the argument is analogous), hence  \[\mu_{x}(J\cap B_{d_x}(x,r_n)) = \mu_x(J\cap B_{d_x}(x,r)) + \mu_x(J\cap B_{d_x}(x,r_n)\setminus J\cap B_{d_x}(x,r)).\] As $\mu_x$ is non-atomic we have 
		\[\lim_{n\rightarrow \infty}\mu_x(B_{d_x}(x,r_n)\setminus B_{d_x}(x,r)) =0.\]
		Then,
        \[\mu_x(J\cap B_{d_x}(x,r_n)\setminus J\cap B_{d_x}(x,r)) =\mu_x(J \cap ( B_{d_x}(x,r_n)\setminus  B_{d_x}(x,r))) \rightarrow 0,\]
		showing the first part of the statement.
		
		Let us show the second statement. For each $x\in M_0$, let $x\in V_x$ be a local chart with
		\[y\in V_x \Rightarrow B_{d_y}(y,\mathfrak r) \subset U_x, \quad \text{for some} \; U_x \in \mathcal U.\]
		As $M$ is compact we may cover $M$ with a finite number of such local charts, say $V_1,V_2, \ldots, V_l$ and call $U_1, U_2, \ldots, U_l$ the associated local charts in $\mathcal U$.
		For any $j$, consider
		\[y\in V_j \mapsto \mu_y(J\cap B_{d_y}(y,r)).\]
		Observe that 
		\[\mu_y(J\cap B_{d_y}(y,r)) =\frac{\mu^{U_j}_y(J\cap B_{d_y}(y,r))}{\mu^{U_j}_y(B_{d_y}(y,\mathfrak r))}. \]
		Therefore, by Proposition \ref{prop:unbounded.measurable2}, $y\in V_j \cap M_0 \mapsto \mu_y(J\cap B_{d_y}(y,r))$ is a $\mathcal B_{U_j\setminus \mathcal A_{U_j}}$-measurable map. As $j$ is arbitrary, $y\in M_0 \mapsto \mu_y(J\cap B_{d_x}(y,r))$ is a $\mathcal B_{U\cap M_0}$-measurable map.
		
		Thus, the map given by \eqref{eq:primafunc1} is jointly measurable as it is continuous in the first coordinate and $\mathcal B_{M_0}$-measurable in the second. The second conclusion follows from taking $J=M$.
	\end{proof}
		
	\section{Proof Theorem \ref{theorem:A}} \label{sec:DDL}


	First of all, we can assume that $(M,\mu)$ is an atom-less probability space and that the disintegration of $\mu$ along $\mathcal F$ is not atomic, otherwise there would be nothing to do. We also consider the same objects fixed in the beginning of Section \ref{sec:FiberedSpaces}.
	
	
	Now we define the distortion of the disintegration  measure of $\mu$ relative to the metric system in each leaf $\mathcal{F}(x)$ by the following.
	\begin{definition} \label{defi:distortion2}
		Let $(M,\mu)$ be a probability space and $\mathcal F$ be a continuous one-dimensional foliation of $M$. Let $\{\mu_x\}$ be the system of conditional measures along $\mathcal F$ given by \eqref{eq:defimux}, for $x\in M_0$\footnote{see Lemma \ref{lemma:defiZ} and recall that $\mu(M_0)=1$.}, and let $d=\{d_x\}$ be the $\mathcal F$-metric system induced by the arc-length system $\{l_x\}$ as in Definition \ref{defi:Fmetric}. We define the $\mu$-distortion of the $\mathcal{F}$-metric system by
		\[\Delta(x)= \begin{cases}
			\limsup_{\varepsilon \rightarrow 0}\frac{\mu_x(B_{d_x}(x,\varepsilon))}{2\varepsilon} := \lim_{n\rightarrow \infty}\left( \sup_{\varepsilon \leq 1/n} \frac{\mu_x(B_{d_x}(x,\varepsilon))}{2\varepsilon}\right) &\text{ if }  \quad x\in  M_0\\
			0 &\text{ if } \quad x\notin M_0.
		\end{cases} \]
	\end{definition}

	Recall that $B_{d_x}(x,\varepsilon)$ is the ball inside $\mathcal F(x)$, centered in the point $x$ and with radius $\varepsilon$ with respect to the metric $d_x$.
\begin{lemma}
The map $x \in M\mapsto \Delta(x)$ is measurable.
\end{lemma}
\begin{proof}
Clearly it is sufficient to check the measurability of $\Delta(x)$ in $M_0$.
    By Corollary \ref{cor:jointlymeasurable}, for any fixed $x\in M_0$ the function $w(x,\cdot):(0,\mathfrak r) \rightarrow \mathbb (0,\infty)$ is continuous. Thus, for $n$ small enough, 
    \[\sup_{\varepsilon \leq 1/n} \frac{\mu_x(B_{d_x}(x,\varepsilon))}{2\varepsilon} = \sup_{r\in \mathbb Q \cap (0,1/n] } \frac{\mu_x(B_{d_x}(x,r))}{2r}, \quad \forall x\in M_0.\]
For each $n\in \mathbb N$, the map $L_n:M_0 \to \mathbb R$, $x\in M_0 \mapsto \sup_{r\in \mathbb Q \cap (0,1/n] } \frac{\mu_x(B_{d_x}(x,r))}{2r}$ is measurable, as it is defined by the supremum over a countable number of measurable maps. Consequently, since $\Delta(x)= \lim_{n\rightarrow \infty} L_n(x)$ for all $x\in M_0$, we conclude that $\Delta(x)$ is measurable as we wanted.
\end{proof}

	Observe that, a priori, $\Delta(x)$ is a  measurable function but it is not immediately true that $\Delta(x)<\infty$ for $\mu$-almost every $x$. Also note that, 
	\[f_{*}\mu_x = \mu_{f(x)} \;\text{ and }\;f(B_{d_x}(x,\varepsilon)) = B_{d_{f(x)}}(f(x),\varepsilon),\]
	since
	\[d_{f(x)}(f(x),f(y))=d_x(x,y),\] 
	we conclude that $\Delta(x)$ is $f$-invariant map.
	By ergodicity of $f$ it follows that $\Delta(x)$ is constant almost everywhere, let us call that constant by $\Delta$, that is for almost every $x$:
	\begin{equation}\label{eq:delta}
		\Delta(x)  = \Delta.
	\end{equation}
	
	Let $\widehat{M}\subset M_0$ be a Borel $f$-invariant full measure set of points $x$ for which \eqref{eq:delta} occurs.

	\subsection{Technical Lemmas for the case $\Delta = \infty$} \label{sec:infinitycase}
	
	\begin{lemma} \label{lema:sequenciaboa2} 
		If $\Delta=\infty$, there exists a sequence $\varepsilon_k\rightarrow 0$, as $k\rightarrow +\infty$, and a full measure subset $R^{\infty} \subset \widehat{M}$ such that
		\begin{itemize}
			\item[i)] $R^{\infty}$ is $f$-invariant;
			\item[ii)]for all $x \in R^{\infty}$ we have 
			\begin{equation}\label{eq:infity}
				\frac{\mu_x(B_{d_x}(x,\varepsilon_k))}{2\varepsilon_{k}} \geq k .\end{equation}
		\end{itemize} 
	\end{lemma}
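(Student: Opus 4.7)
The plan is to use ergodicity not point-by-point but after fixing $\varepsilon$, and then use the joint measurability and the continuity of the radius function established in Corollary \ref{cor:jointlymeasurable} and Lemma \ref{lemma:lastaux} to transfer between a single good radius and a full-measure set.

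For each $\varepsilon>0$, consider the function
\[\phi_\varepsilon(x) := \frac{\mu_x(B_{d_x}(x,\varepsilon))}{\lambda_x(B_{d_x}(x,\varepsilon))}.\]
First I would check that $\phi_\varepsilon$ is $G$-invariant. Since the system of metrics is $G$-invariant we have $g\cdot B_{d_x}(x,\varepsilon)=B_{d_{gx}}(gx,\varepsilon)$, and with the normalization $\mu_x(B_{d_x}(x,1))=1$ (preserved because $g\cdot B_{d_x}(x,1)=B_{d_{gx}}(gx,1)$), the equalities $g_*\mu_x=\mu_{gx}$ and $g_*\lambda_x=\lambda_{gx}$ hold; therefore $\phi_\varepsilon(gx)=\phi_\varepsilon(x)$. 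Measurability of $\phi_\varepsilon$ comes from Corollary \ref{cor:jointlymeasurable} and Lemma \ref{lemma:lastaux}. By ergodicity of $G\acts X$, for each fixed $\varepsilon$ there is a constant $c(\varepsilon)\in[0,\infty]$ with $\phi_\varepsilon(x)=c(\varepsilon)$ on a full-measure, $G$-invariant subset $\Omega_\varepsilon\subset D$.

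Next, pass to countably many radii simultaneously: set
\[\Omega := D\cap\bigcap_{\varepsilon\in\mathbb{Q}\cap(0,\infty)}\Omega_\varepsilon,\]
which is $G$-invariant and of full measure. For every $x\in\Omega$, Corollary \ref{cor:jointlymeasurable} and Lemma \ref{lemma:lastaux} give that $r\mapsto \mu_x(B_{d_x}(x,r))$ and $r\mapsto \lambda_x(B_{d_x}(x,r))$ are continuous, so $r\mapsto\phi_r(x)$ is continuous wherever the denominator is positive. Since $\phi_\varepsilon(x)=c(\varepsilon)$ for every rational $\varepsilon$ and continuous in $\varepsilon$, the function $c$ extends continuously from $\mathbb{Q}_+$ to a continuous function $\tilde c$ on $(0,\infty)$ (or equivalently, $c$ is determined on all positive reals by its rational values), and $\phi_\varepsilon(x)=\tilde c(\varepsilon)$ for every $x\in\Omega$ and every $\varepsilon>0$. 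Taking $\limsup_{\varepsilon\to 0}$ and using $\overline{\Delta}(x)=\infty$ for $x\in D$ yields $\limsup_{\varepsilon\to 0}\tilde c(\varepsilon)=\infty$.

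Finally, I would extract the sequence. For each $k\in\mathbb{N}$ the condition $\limsup_{\varepsilon\to 0}\tilde c(\varepsilon)=\infty$ combined with the continuity of $\tilde c$ and density of $\mathbb{Q}$ allows us to pick a rational $\varepsilon_k\in(0,1/k)$ with $\tilde c(\varepsilon_k)\geq k$. Put $R^\infty:=\Omega$. Then $R^\infty$ has full $\mu$-measure, is $G$-invariant, and for every $x\in R^\infty$ we have
\[\frac{\mu_x(B_{d_x}(x,\varepsilon_k))}{\lambda_x(B_{d_x}(x,\varepsilon_k))}=\phi_{\varepsilon_k}(x)=\tilde c(\varepsilon_k)\geq k,\]
with $\varepsilon_k\to 0$, which is exactly (\ref{eq:infity}).

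The delicate step is the transition from the pointwise divergence of $\overline{\Delta}(x)$ (which a priori gives a sequence $\varepsilon_k(x)$ depending on $x$) to a single sequence $\varepsilon_k$ valid on a full-measure set. The trick is that ergodicity forces $\phi_\varepsilon$ to be essentially constant for every \emph{fixed} $\varepsilon$, while joint continuity of the numerator and denominator in the radius lets one collapse the uncountable family of null-measure exceptional sets into the countable one indexed by rationals; this is really where the hypothesis that $\lambda_x$ and $\mu_x$ assign no mass to spherical shells (non weak-atomicity and property H1) enters.
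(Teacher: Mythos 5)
Your argument is correct, and it is organized differently from the paper's proof. The paper fixes $k$ and defines the $x$-dependent radius $\varepsilon_k(x):=\sup\{\varepsilon\le 1: \phi_\varepsilon(x)\ge k\}$, then establishes that this supremum is a measurable function of $x$ (this is the paper's ``Claim'', proved via the joint Carath\'eodory structure of $(x,r)\mapsto\phi_r(x)$), and only then applies ergodicity to make $\varepsilon_k(\cdot)$ a.e.\ constant. You instead fix the radius $\varepsilon$, observe that $\phi_\varepsilon$ is a $G$-invariant measurable function of $x$, apply ergodicity to get $\phi_\varepsilon\equiv c(\varepsilon)$ on a full-measure invariant set for each rational $\varepsilon$, and then use the continuity of $r\mapsto\phi_r(x)$ (Corollary \ref{cor:jointlymeasurable} and Lemma \ref{lemma:lastaux}) to upgrade the countable family of a.e.\ identities into a single statement $\phi_r(x)=\tilde c(r)$ valid for \emph{all} $r>0$ and all $x$ in a fixed full-measure invariant set $\Omega$; extracting $\varepsilon_k$ from $\limsup_{\varepsilon\to 0}\tilde c(\varepsilon)=\infty$ is then immediate. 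The two proofs use the same raw ingredients (ergodicity, continuity in $r$, measurability in $x$), but your route sidesteps the measurability-of-the-supremum step entirely and in fact yields a slightly stronger conclusion: the entire function $r\mapsto\phi_r(x)$, not just its values along some sequence, is the same for a.e.\ $x$. One small point to keep explicit: you tacitly need $\lambda_x(B_{d_x}(x,\varepsilon))>0$ so that $c(\varepsilon)$ is finite and the continuous extension is real-valued; this is supplied by the packing-regularity hypothesis (H1) on $\lambda_x$ at $\mu$-typical $x$, the same implicit requirement as in the paper.
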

	\begin{proof}
		Let $k\in \mathbb N^{*}$ arbitrary. Since $\Delta(x) = \Delta$ for every $x \in \widehat{M}$, define
		\[\varepsilon_k(x):= \sup \left\{\varepsilon \leq 1: \frac{ \mu_x(B_{d_x}(x,\varepsilon))}{ 2\varepsilon} \geq k \right\}, \quad x\in \widetilde{M}.\]
		
		\noindent {\bf Claim:},
		The function $\varepsilon_k(x)$ is measurable for all $k \in \mathbb N$.
		\begin{proof}
			Define 
			\[
			w(x,\varepsilon) =  \frac{ \mu_x(B_{d_x}(x,\varepsilon))}{2\varepsilon}. 
			\]
			
			By Corollary \ref{cor:jointlymeasurable}, for any $x\in M_0$ the function $w(x,\cdot):(0,\mathfrak r) \rightarrow \mathbb (0,\infty)$ is continuous and, for $0<\varepsilon<\mathfrak r$ fixed the function $w(\cdot, \varepsilon):M_0 \rightarrow \mathbb (0,\infty)$ is measurable function by Proposition \ref{prop:unbounded.measurable2}.
			Given any $k\in \mathbb N$, $\beta>0$, the continuity of $w(x,\cdot)$ implies that 
			\begin{align*}
				\varepsilon_k^{-1}((0,\beta)) = & \{x: \varepsilon_{k}(x) \in (0,\beta)\} \\
				= & \bigcap_{\beta\leq r \leq 1} w(\cdot, r)^{-1}([0, k)) \\
				= & \bigcap_{\beta\leq r \leq 1, r\in \mathbb Q} w(\cdot, r)^{-1}([0, k)). \end{align*}
			Therefore $\varepsilon_k^{-1}((0,\beta))$ is measurable, as it is a countable intersection of measurable subsets of $M_0$, and consequently $\varepsilon_k$ is a measurable function for every $k$.
		\end{proof}
		
		Note that $\varepsilon_k(x)$ is $f$-invariant. Thus, by ergodicity, for every $k\in \mathbb{N}$ the function $\varepsilon_k$ is constant almost everywhere, let $R^{\infty}_k$ be a full measure set such that $\varepsilon_k(x)$ is constant equal to $\varepsilon_k$.
		It is easy to see that the sequence $\varepsilon_k$ goes to $0$ as $k$ goes to infinity. Take
		$\widetilde{R}^{\infty}:= \bigcap_{k=1}^{+\infty} R^{\infty}_k$.
		Since each $R^{\infty}_k$ has full measure, $\widetilde{R}^{\infty}$ has full measure and clearly satisfies what we want for the sequence $\{\varepsilon_k\}_{k}$.
		Finally, take $R^{\infty}=\bigcap_{i\in\mathbb{Z}} f^{i}(\widetilde{R}^{\infty})$. The set $R^{\infty}$ is $f$-invariant, has full measure and satisfies $(i)$ and $(ii)$. 
	\end{proof}
	
	We now set, for each $U\in \mathcal U$, $x\in U\setminus (\mathcal Z_U \cup \mathcal A_U)$,
	\[\Pi^{\infty}_{x,U}:= \left\{y \in \mathcal F|U(x) \setminus \mathcal Z_U : \; \frac{1}{2\varepsilon_k}\cdot \frac{ \mu^U_y(B_{d_x}(y,\varepsilon_k))}{\mu^U_y(B_{d_x}(y,\mathfrak r)) } \geq k , \forall k \; \text{with} \; B_{d_x}(y,\varepsilon_k) \subset U \right\},\]
	and
	\[\Pi^{\infty}_U := \bigcup_{x\in U\setminus (\mathcal Z_U \cup \mathcal A_U)} \Pi^{\infty}_{x,U}.\]
	
	Observe that if $x\in R^{\infty}$ then $x\in \Pi^{\infty}_{x,U}$ therefore $R^{\infty}\cap U \subset \Pi^{\infty}_U$. In particular $U\setminus \Pi^{\infty}_U \subset U\setminus R^{\infty}$. Since $\mu(R^{\infty})=1$ then $\Pi^{\infty}_U$ is measurable. Also we can clearly assume that $\varepsilon_k$ is strictly decreasing and $\varepsilon_1<\mathfrak r$.
	
	\begin{lemma} \label{lemma:closed}
		For every $x \in R^{\infty} \cap U$, consider $\delta=\delta(x)>0$ for which
		\[B_{d_x}[x,2\cdot \delta+\mathfrak r]\subset U.\]
		The set $\Pi^{\infty}_{x,U
	} \cap B_{d_x}[x,\delta]$ is a closed subset on the plaque $\mathcal F|U(x)$.
	\end{lemma}
	\begin{proof}
		Let $y_n \rightarrow y$, $y_n \in \Pi^{\infty}_{x,U}\cap B_{d_x}[x,\delta]$, $y\in \mathcal F|U(x)$.
		In particular,
		$B_{d_x}(y_n,\mathfrak r) \subset B_{d_x}(x,\delta+\mathfrak r) \subset U$ and by taking the limit over $n$ we also have
		$B_{d_x}(y,\mathfrak r) \subset B_{d_x}(x,\delta+\mathfrak r) \subset U$. Furthermore, it is clear that $y\in B_{d_x}[x,\delta]$ since this is a closed set.
		By Lemma \ref{lemma:continuousinleaf}, for each $k\in \mathbb N$ the map
		\[y \in B_{d_x}[x,\delta] \subset F_{\varepsilon_k}(x) \mapsto \mu^U_y(B_{d_x}(y,\varepsilon_k)),\]
		is continuous and the same holds for
		\[y \in B_{d_x}[x,\delta] \subset F_{\mathfrak r}(x) \mapsto \mu^U_y(B_{d_x}(y,\mathfrak r)).\]
		 Thus,
		\[\lim_{n\rightarrow \infty} \frac{\mu^U_{y_n}(B_{d_x}(y_n,\varepsilon_k))}{\mu^U_{y_n}(B_{d_x}(y_n,\mathfrak r))} = \frac{\mu^U_{y}(B_{d_x}(y,\varepsilon_k))}{\mu^U_{y}(B_{d_x}(y,\mathfrak r))}, \quad k\geq 1.\]
		which implies that for all $k\geq 1$ we have
		\[\frac{\mu^U_y(B_{d_x}(y,\varepsilon_k))}{2\varepsilon_k \cdot \mu^U_y(B_{d_x}(y,\mathfrak r))} = \lim_{n\rightarrow \infty} \frac{\mu^U_{y_n}(B_{d_x}(y_n,\varepsilon_k))}{2\varepsilon_k \cdot \mu^U_{y_n}(B_{d_x}(y_n,\mathfrak r))} \geq k,\]
		that is, $y\in \Pi^{\infty}_{x,U}$ as we wanted.
		
		%
		%
		%
	\end{proof}

	Now we consider the following sets 
	\[D^{\infty}_U := \mathcal F|U(\Pi^{\infty}_U) \setminus  (\mathcal F|U)(\mathcal{Z_U}).\]
	
	
	We claim that $D^{\infty}_U$ is a $\mathcal A$-measurable subset. In fact, consider the natural projection
	$\pi:U\to U/\mathcal{F}$, as $U$ is an open subset of a manifold (in particular it is a Polish space) we have that $U/\mathcal{F}$ is a Polish space with the quotient topology.  By Lemma \ref{lemma:defiZ} $\mathcal{Z}_U = V \cap (U\setminus \mathcal A_U)$, for some Borel set $V$ . Since $U\setminus \mathcal A_U$ is $\mathcal F|U$-saturated we have
 \[\mathcal{F}|(\mathcal{Z}_U)= \pi^{-1}(\pi(V))\cap (U\setminus \mathcal A_U).\] 
	Note that $\pi(V)$ is a Souslin set \footnote{A subset of a Polish space $Y$ is called a Souslin set, or an analytical set, if it is the image of a Polish space $X$ by a continuous map from $X$ to $Y$.}
 by \cite[Corollary 1.10.9]{bogachevI}. Consequently, $ \pi^{-1}(\pi(V)) $ is measurable in the $\sigma$-algebra $\mathcal{A}$, which we recall is the completion of the Borel sigma algebra $\mathcal B$ , implying that $\mathcal{F}|(\mathcal{Z}_U)$ is $\mathcal A$-measurable.
 
	
	Since $R^{\infty}\cap U\subset \mathcal{F}|U(\Pi^{\infty}_U)$ and $\mu(R^{\infty})=1$ we have that $\mathcal{F}|U(\Pi^{\infty}_U)$ is a $\mathcal A$-measurable subset of $U$, this implies that $D^\infty_U=\mathcal F|U(\Pi^{\infty}_U) \setminus \mathcal F|U(\mathcal{Z}_U)$ is a $\mathcal A$-measurable set as we wanted to show.
	
	Since $D^\infty_U$ is a $\mathcal A$-measurable, by ergodicity of $f$ the $f$-invariant set:
	\[D^{\infty}:=\bigcup_{n\in \mathbb Z, U\in \mathcal U}f^n\left( \bigcup D^{\infty}_U \right),\] must satisfy either $\mu(D^{\infty})=0$ or $\mu(D^{\infty})=1$.
	
	%
	

	\subsection{Technical Lemmas for the case $\Delta < \infty$}
	
	\begin{lemma} \label{lema:sequenciaboa} 
		If $\Delta <\infty$, there exists a sequence $\varepsilon_k\rightarrow 0$, as $k\rightarrow +\infty$, and a full measure subset $R \subset \widehat{M}$ such that
		\begin{itemize}
			\item[i)] $R$ is $f$-invariant;
			\item[ii)]for every $x \in R$, then
			\begin{equation}\label{eq:uniform}
				\left| \frac{ \mu_x(B_{d_x}(x,\varepsilon_k))}{2\varepsilon_{k}} - \Delta  \right|   \leq \frac{1}{k};\end{equation}
		\end{itemize}
	\end{lemma}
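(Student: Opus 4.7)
The plan is to mimic the construction in Lemma \ref{lema:sequenciaboa2}, but selecting values of $\varepsilon$ at which the ratio is close to (rather than exceeds) $\overline{\Delta}$. For each $k\geq 1$, writing
\[w(x,\varepsilon) := \frac{\mu_x(B_{d_x}(x,\varepsilon))}{\lambda_x(B_{d_x}(x,\varepsilon))},\]
I would define
\[\varepsilon_k(x) := \sup\left\{ \varepsilon \leq \tfrac{1}{k} : \left| w(x,\varepsilon) - \overline{\Delta}\right| \leq \tfrac{1}{k}\right\}.\]
Since we are in the non weakly-atomic case with $\overline{\Delta}<\infty$, on the full-measure set $D$ where $\overline{\Delta}(x)=\overline{\Delta}$ the limsup definition gives a sequence of arbitrarily small $\varepsilon$ with $w(x,\varepsilon)\to \overline{\Delta}$, so the set over which the supremum is taken is non-empty and $\varepsilon_k(x)>0$ for $x\in D$.

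Next I would check measurability of $\varepsilon_k$. By Corollary \ref{cor:jointlymeasurable} and Lemma \ref{lemma:lastaux}, $w$ is jointly measurable and $w(x,\cdot)$ is continuous. For $\alpha \in (0, 1/k]$, continuity of $w(x,\cdot)$ (which makes the set defining the supremum closed, hence attained whenever non-empty) gives
\[\{x : \varepsilon_k(x) \geq \alpha\} = \left\{x : \inf_{\varepsilon \in [\alpha, 1/k]} \left|w(x,\varepsilon) - \overline{\Delta}\right| \leq \tfrac{1}{k}\right\} = \left\{x : \inf_{\varepsilon \in [\alpha, 1/k]\cap \mathbb Q} \left|w(x,\varepsilon) - \overline{\Delta}\right| \leq \tfrac{1}{k}\right\},\]
where the reduction to rationals again uses continuity of $w(x,\cdot)$. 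The right-hand side is measurable as a countable infimum of measurable functions, so $\varepsilon_k$ is measurable.

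For the $G$-invariance, the $G$-invariance of the metric system together with $g_\ast \mu_x = \mu_{g(x)}$ and $g_\ast \lambda_x=\lambda_{g(x)}$, plus $g(B_{d_x}(x,\varepsilon))=B_{d_{g(x)}}(g(x),\varepsilon)$, imply $w(x,\varepsilon)=w(g(x),\varepsilon)$, hence $\varepsilon_k$ is $G$-invariant. By ergodicity of $G\acts (X,\mu)$, $\varepsilon_k$ is constant on a $G$-invariant set $R_k$ of full measure, say $\varepsilon_k(x)=\varepsilon_k \in (0,1/k]$. Setting $\widetilde R := D \cap \bigcap_{k\geq 1} R_k$ and $R := G\cdot \widetilde R$, the set $R$ is $G$-invariant of full measure. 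Since $\varepsilon_k \leq 1/k$ we have $\varepsilon_k \to 0$, and for $x\in R$ the continuity of $w(x,\cdot)$ ensures the supremum is attained, yielding
\[\left|\frac{\mu_x(B_{d_x}(x,\varepsilon_k))}{\lambda_x(B_{d_x}(x,\varepsilon_k))} - \overline{\Delta}\right| \leq \frac{1}{k}.\]

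The main obstacle, as in Lemma \ref{lema:sequenciaboa2}, is the measurability of $\varepsilon_k$, since the defining condition is closed rather than open; the subtle point is that the reduction of the infimum to rationals is valid only thanks to the continuity of $\varepsilon\mapsto w(x,\varepsilon)$, which in turn requires the non weakly-atomic assumption through Corollary \ref{cor:jointlymeasurable} together with hypothesis H1 of Definition \ref{defi:regularmetric} through Lemma \ref{lemma:lastaux}. Once this measurability is in hand, the ergodicity argument runs exactly in parallel to the $\overline{\Delta}=\infty$ case.
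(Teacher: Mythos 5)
Your proof is correct and follows essentially the same route as the paper: define a measurable, $G$-invariant function $\varepsilon_k(x)$ via a supremum condition on the ratio $w(x,\varepsilon)$, invoke ergodicity to make it constant on a full-measure $G$-invariant set, then intersect over $k$. Your measurability argument, phrased via $\{x:\varepsilon_k(x)\geq\alpha\}$ and reduction of an infimum over $[\alpha,1/k]$ to rationals, is a clean equivalent of the paper's treatment of $\varepsilon_k^{-1}((0,\beta))$.

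One genuine improvement: you restrict the supremum to $\varepsilon\leq 1/k$, which forces $\varepsilon_k\to 0$ automatically. The paper's definition
\[\varepsilon_k(x):=\sup\Bigl\{\varepsilon:\bigl|w(x,\varepsilon)-\overline{\Delta}\bigr|\leq\tfrac{1}{k}\Bigr\}\]
has no such cap, and the paper simply asserts that $\varepsilon_k\to 0$. This is not automatic in the bounded case: if, for instance, $w(x,\varepsilon_0)=\overline{\Delta}$ at some fixed $\varepsilon_0>0$ (entirely possible, since $w(x,\cdot)$ is continuous and oscillates toward a finite limsup), then $\varepsilon_k(x)\geq\varepsilon_0$ for every $k$, and the sequence would not decay. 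In the analogous Lemma in the $\overline{\Delta}=\infty$ case this cannot happen because the threshold $k$ diverges and $w(x,\cdot)$ is bounded on compacts away from $0$, but in the finite case the threshold $1/k$ shrinks around a value the function may actually attain. Your $\varepsilon\leq 1/k$ constraint repairs this while leaving the rest of the argument unchanged, so your version is both correct and slightly tighter than the paper's.
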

	\begin{proof}
		The proof is similar to the proof of Lemma \ref{lema:sequenciaboa2}.
		Let $k\in \mathbb N^{*}$ arbitrary. Since $\Delta(x) = \Delta$, for every $x \in \widehat{M}$  we define
		\[\varepsilon_k(x):= \sup \left\{\varepsilon \leq \frac{1}{k}: \left| \frac{ \mu_x(B_{d_x}(x,\varepsilon))}{2\varepsilon} - \Delta  \right|  \leq \frac{1}{k} \right\}.\]
		Observe that for $x\in \widehat{M}$ such $\varepsilon_k(x)$ exists by the definition of $\Delta$, $\varepsilon_l(x) \rightarrow 0$ as $l\rightarrow \infty$   and
        \[\lim_{l\rightarrow \infty}\left|\frac{ \mu_x(B_{d_x}(x,\varepsilon_l(x)))}{2\varepsilon_l(x)} - \Delta  \right|=0. \]
        
		
		\noindent {\bf Claim:}
		The function $\varepsilon_k(x)$ is measurable for all $k \in \mathbb N$.
		\begin{proof}
			Define \[w(x,\varepsilon) =  \frac{ \mu_x(B_{d_x}(x,\varepsilon))}{ 2\varepsilon}. \]
			As observed in the proof of Lemma \ref{lema:sequenciaboa2}, $r\mapsto w(x,r)$ is continuous and $x\mapsto w(x, r)$ is measurable.
            Given any $k\in \mathbb N$, $k>0$, by the definition of $\varepsilon_k$, for all $\beta> 1/k$ we have $\varepsilon_k^{-1}((0,\beta)) = \widehat{M}$.
			Also, the continuity of $w(x,\cdot)$ implies that for every $\beta\leq 1/k$,
			\begin{align*}
				\varepsilon_k^{-1}((0,\beta)) = & \{x: \varepsilon_{k}(x) \in (0,\beta)\} \\
				= & \bigcap_{\beta\leq r \leq 1/k} w(\cdot, r)^{-1}\left(\left[\Delta+\frac{1}{k}, \infty\right)\right) \cup  w(\cdot, r)^{-1}\left(\left[0, \Delta-\frac{1}{k} \right)\right) \\
				= & \bigcap_{\beta\leq r \leq 1/k, r\in \mathbb Q} w(\cdot, r)^{-1}\left(\left[\Delta+\frac{1}{k}, \infty \right)\right) \cup  w(\cdot, r)^{-1}\left(\left[0, \Delta-\frac{1}{k} \right)\right) .
 			\end{align*}
			Therefore $\varepsilon_k^{-1}((0,\beta))$ is measurable, as it is a countable intersection of measurable sets, and consequently $\varepsilon_k$ is a measurable function for every $k$.
		\end{proof}
		
		As $\varepsilon_k(x)$ is $f$-invariant, by ergodicity we may take the full measure set $R_k$ where $\varepsilon_k(x)$ is constant equal to $\varepsilon_k$.
		The sequence $\varepsilon_k$ goes to $0$ as $k$ goes to infinity, so we set
		$\tilde{R}:= \bigcap_{k=1}^{+\infty} R_k$.
		Since each $R_k$ has full measure, $\tilde{R}$ has full measure and clearly satisfies what we want for the sequence $\{\varepsilon_k\}_{k}$.
		The set $R=\bigcap_{i\in\mathbb{Z}}f^{i}(\widetilde{R})$ is $f$-invariant, has full measure and satisfies $(i)$ and $(ii)$ as we wanted. 
	\end{proof}
	
	Similar to the definitions made in section \ref{sec:infinitycase} we set
	\[\Pi_U := \bigcup_{x\in U\setminus (\mathcal Z_U \cup \mathcal A_U)} \Pi_{x,U}.\] 
	where
	\[\Pi_{x,U}:= \left\{y \in \mathcal F|U(x) \setminus \mathcal Z_U : \; \left|\frac{1}{2\varepsilon_k}\cdot \frac{ \mu^U_y(B_{d_x}(y,\varepsilon_k))}{\mu^U_y(B_{d_x}(y,\mathfrak r)) } - \Delta \right| \leq \frac{1}{k} , \forall k \; \text{with} \; B_{d_x}(y,\mathfrak r) \subset U \right\}.\]
	
	\begin{lemma} \label{lemma:finiteok}
			For every $x \in R \cap U$, consider $\delta(x)>0$ for which
		\[B_{d_x}[x,2\cdot \delta+\mathfrak r]\subset U.\]
		The set $\Pi_{x,U} \cap B_{d_x}[x,\delta]$ is a closed subset on the plaque $\mathcal F|U(x)$.
	\end{lemma}
	\begin{proof}
		Identical to the proof of Lemma \ref{lemma:closed}.
	\end{proof}

	Similar to the definition made in section \ref{sec:infinitycase}, we consider the set	
	\[D_U := \mathcal F|U(\Pi_U) \setminus (\mathcal F|U)(\mathcal{Z_U}),\]
	and
	\[D:=\bigcup_{n\in \mathbb Z, U\in \mathcal U}f^n\left( \bigcup D_U \right).\]
	Similarly $D_U$ is $\mathcal A$-measurable for all $U\in \mathcal U$ and again by ergodicity we have $\mu(D)=0$ or $\mu(D)=1$.

	\vspace{0.3cm}
After proving the auxiliary lemmas for $\Delta^\infty$ (resp. $\Delta$) and obtaining the sets $D$ (resp. $D^\infty$) we divide the next part of the proof into four cases.

	\subsection{Case 1: $\Delta<\infty$ and $\mu(D)=0$.}  \label{sec:Case1} \quad \\
	In this case we will show that the support of $\mu_x$ is a Cantor subset for almost every $x\in M$.
	
	As fixed in the beginning , consider $\{\omega_x\}_x$ the disintegration of $\mu$ along $\mathcal F$ and consider $\mathcal G$ a full measure $\mathcal F$-saturated set of points where 
    \[f^j_*\Omega_x = \Omega_{f^j(x)}, \quad \forall j\in \mathbb Z.\]
 
	Let $\mathcal G^U:=\{x\in U\setminus D: \mu^U_x = \omega_x(\cdot | \mathcal F|U(x))\}\cap \{x: \mu^U_x \; \text{is non-atomic}\}$.
	
	
	Consider:
	\begin{itemize}
	\item $\Phi^U_{1/n}(\mathcal Z_U):=\{x\in U: d_x(x,\mathcal Z_U) <1/n\},$
	    \item
	    $\mathcal E_n = \bigcup_j \mathcal G\cap f^j(\Phi_{1/n}^U(\mathcal Z_U) \cap \mathcal G^U)$.
	    \end{itemize}
	As $\mathcal Z_U = \chi_U \cap (U\setminus \mathcal A_U)$ we have
	\[\Phi^U_{1/n}(\mathcal Z_U) = \{x \in M: d_x(x,\mathcal Z_U) < 1/n\} \cap (U\setminus \mathcal A_U), \]
	which is measurable for $n \geq \hat{n}$, for some $\hat{n}\in \mathbb N$ not depending on $U$, by Lemma \ref{conjuntofluxo} since $\chi_U$ is a Borel subset of $U$. Therefore the set of the second item is a $f$-invariant measurable subset of $M$, thus it either has full or null measure.
	
	Now, again we separate two cases:
	
	\begin{itemize}
	\item Case 3.1: Assume that for all $n\geq \hat{n}$, $\mu(\mathcal E_n)=1$. Then,
	    \[\mathcal E_n^U=\mathcal G^U \cap \left(\bigcup_j \mathcal G\cap f^j(\Phi_{1/n}^U(\mathcal Z_U) \cap \mathcal G^U)\right),\]
	    has full measure in $U$ for every $n\geq \hat{n}$. 
	    For $z\in \mathcal E^U = \bigcap_{n\geq \hat{n}} \mathcal E^U_n$, let $n_0>0$ such that for $n\geq n_0 \geq \hat{n}$ we have $B_{d_x}(z,1/n_0)\subset U$. For $n\geq n_0$, let $j$ with
	    \[f^{-j}(z) \in \Phi^{U}_{1/n}(\mathcal Z_U)\cap \mathcal G^{U},\]
	    and $p \in \mathcal Z_U$ with $d_x(p,f^{-j}(z))<1/n$. Then $d_x(f^j(p),z)<1/n$, which implies $f^j(p)\in U$, and if $\mu^U_p(B_{d_x}(p,\delta))=0$, for $\delta$ small, then since $\mu^U_p \sim \omega_{f^{-j}(z)}$ (because $f^{-j}(z) \in \mathcal G^U$) and $z\in \mathcal G$, it follows that $\omega_z(f^{-j}(I_p))=\omega_{f^j(z)}(I_p) = 0$ and $z\in \mathcal G^u$, thus $\mu^U_z(f^{-j}(I_p))=0$. Therefore, $f^j(p)\in \mathcal Z_U$ and $z\in \Phi_{1/n}(\mathcal Z_U)$. 
	    That is, the set $\mathcal Z_U \cap \mathcal F|U(z)$ is dense in $\mathcal F|U(z)$, for almost every $z\in \mathcal E^U$. \quad \\
	    
	{\bf Claim:}
	For $x\in \mathcal E^U$, $\mathcal{C}_x:=\mathcal{F}|U(x)\setminus \mathcal{Z}_U$ is a Cantor subset in $\mathcal{F}|U(x)$.
	\begin{proof} To prove that $\mathcal{C}_x$ is a Cantor set, we will show that this set is a nowhere dense and perfect set. Therefore, as it is bounded ($\mathcal C_x \subset \mathcal F|U(x)$), we can conclude that it is a compact, nowhere dense perfect set, that is, a Cantor set. Since $\mathcal{Z}_U \cap \mathcal F|U(x)$ is a dense and open set in $\mathcal{F}|U(x)$ we have that $\mathcal{C}_x$ is a nowhere dense and closed set. 
		Now let us see that $\mathcal{C}_x$ has no isolated points. Let $y\in \mathcal{C}_x$, suppose that there exist $r>0$ with $B_{d_x}(y,r)\subset U$ and $B_{d_x}(y,r)\cap \mathcal{C}_x=\{y\}$. Since $y\in \mathcal{C}_x$, 
		\[0<\mu^U_x(B_{d_x}(y,r))=\mu^U_x(B_{d_x}(y,r)\setminus \mathcal C_x ) + \mu^U_x(\{y\}) = \mu^U_x(\{y\}),\]
		thus  $\mu^U_x(\{y\})>0$, which is a contradiction since $x\notin \mathcal A_U$.
		
		Therefore $\mathcal C_x$ is indeed a Cantor subset. 
		\end{proof}
	Thus, for almost every $x\in M$ and any local chart $U$ the conditional measures $\mu^U_x$ is supported in the Cantor set $\mathcal{C}_x$. In particular the measure $\omega_x$ on $\mathcal F(x)$ is supported on a $\sigma$-Cantor subset of the leaf. We remark that for the Claim we did not use the fact that $\Delta=\infty$, thus the same argument will work when $\Delta<\infty$. \\

	\item Case 3.2: If, on the other hand, there exists $N_0\in \mathbb{N}$ with $N_0\geq \hat{n}$ such that $\mu(\mathcal E_{N_0})=0$, then $\mu(\mathcal E^U_{N_0})=0$ and moreover $\mu(\mathcal E^U_N)=0$, for any $N \geq N_0$. Since
\[\mathcal G^U \cap \mathcal G \cap \varphi_{1/N}(\mathcal Z_U) \subset \mathcal E^U_{N_0},\]
we have
\[\mu(\Phi^U_{1/N}(\mathcal Z_U)) = \mu(\mathcal G^U \cap \mathcal G \cap \Phi_{1/N}(\mathcal Z_U))\leq \mu(\mathcal E_n^U)=0, \quad \forall N\geq N_0.\]
In particular, for almost every $x\in U$ we have
\begin{equation}\label{eq:lasteq}\mu^U_x(\Phi^U_{1/N}(\mathcal{Z}_U\cap \mathcal F|U(x)))=0.\end{equation}
As $\Phi^U_{1/N}(\mathcal{Z}_U\cap \mathcal F|U(x)))$ is an open subset of $\mathcal F|U(x)$, by \eqref{eq:lasteq} we have, for almost every $x\in U$, 
\[\Phi^U_{1/N}(\mathcal{Z}_U\cap \mathcal F|U(x)))\subset \mathcal{Z}_U\cap \mathcal F|U(x)).\]
But clearly the other continence holds, thus $\mathcal{Z}_U\cap \mathcal F|U(x) = \Phi^U_{1/N}(\mathcal{Z}_U\cap \mathcal F|U(x)))$ and this implies $\mathcal{Z}_U\cap \mathcal F|U(x)) = \mathcal F|U(x)$.
As this happens for almost every $x\in U$ we fall in contradiction with the fact that $\mu(\mathcal Z_U \cap U)=0$. Therefore this case does not happen.

	\end{itemize}

	\subsection{Case 2: $\Delta=\infty$ and $\mu(D^{\infty})=0$.}\label{deltayDinfinitos}  \quad \\
	In particular for every $U\in \mathcal U$ we must have $\mu(D^{\infty}_U)=0$ which implies $\mu(\mathcal F|U(\mathcal Z_U))=\mu(U)$.
	
	In this case we will proceed very similarly to the previous Case. For $U\in \mathcal U$, consider the sets $\mathcal E_n$ and $\mathcal E_n^U$ defined in Case 1.
	
Again, if $\mu(\mathcal E_n)=1$ for every $n\in \mathbb Z$, then there exists a full measure subset of $U$, namely $\mathcal E^U$, such that $z\in \mathcal E^U$ implies $\mathcal Z_U \cap \mathcal F|U(z)$ is dense is $\mathcal F|U(z)$. Hence, as showed by the Claim in Case 1, it follows that the support of $\mu^U_x$ is a Cantor subset of the plaque $\mathcal F|U(x)$ for almost every $x\in U$.

	Otherwise, if $\mu(\mathcal E_{N_0})=0$ for some $N_0\in \mathbb N$, then as in Case 1 we conclude that $\mathcal{Z}_U\cap \mathcal F|U(x)) = \mathcal F|U(x)$ contradicting the fact that $\mu(\mathcal Z_U \cap U)=0$, thus this case does not occur.
	
	
	
	\subsection{Case 3: $\Delta = \infty$ and $\mu(D^{\infty})=1$. } \label{sec:Case3} \quad \\ 
	Let us prove that this case cannot occur. Since $\mu(D^{\infty})=1$, there exists $U\in \mathcal U$ with $\mu(D^{\infty}_U)>0$. Since $\mu(\mathcal F|U(\Pi^{\infty}_U))=\mu(U)$, for almost every point $\bar{x}\in D^{\infty}_U$ we have
	\begin{equation}\label{eq:think}
	\mu^U_{\bar{x}}(\Pi^{\infty}_U\cap \mathcal F|U(\bar{x}))=1.\end{equation}
	Take any such typical $\bar{x}$ and consider $x\in \mathcal F|U(\bar{x}) \cap \Pi^{\infty}_U$, in particular  $B_{d_x}(x,\mathfrak r)\subset U$. Also, $\Pi^{\infty}_{x,U}\cap B_{d_x}[x,\delta]$ is closed in $\mathcal F|U(x)$ for some $\delta>0$ small, if there exists $z\in B_{d_x}[x,\delta] \setminus \Pi^{\infty}_{x,U}\cap B_{d_x}[x,\delta]$ then for some $\delta_2>0$ we have $B_{d_x}(z,\delta_2) \subset B_{d_x}[x,\delta] \setminus \Pi^{\infty}_{x,U}\cap B_{d_x}[x,\delta]$ and $\mu^U_x(B_{d_x}(z,\delta_2))=0$ by \eqref{eq:think}. But this cannot happens since this would imply $z\in \mathcal Z_U $ and, consequently, $\mathcal F|U(\mathcal Z_U) \cap D^{\infty}_U \ne \emptyset$, falling in contradiction with the definition of $D^{\infty}_U$. 
	Therefore
	\[\Pi^{\infty}_{x,U}\cap B_{d_x}[x,\delta]= B_{d_x}[x,\delta].\]
	
	Consider $0<r_0<\delta$ small enough so that $B_{d_x}(x,\mathfrak r+2\cdot r_0)\subset U$.
	By hypothesis, for $k$ with $\varepsilon_k<r_0$, by Remark \ref{metricsystem}, we can take $\lfloor r_0/\varepsilon_k\rfloor$ disjoint balls of radius $\varepsilon_k$ inside $B_{d_x}(x,r_0)$, say with center $a_1,a_2,\ldots,a_{\lfloor r_0/\varepsilon_k\rfloor}$. Then
	\[\sum_{i=1}^{\lfloor r_0/\varepsilon_k\rfloor}\mu^U_{x}(B_{d_x}(a_i,\varepsilon_k)) \leq \mu^U_x(B_{d_x}(x,r_0)) \]
	\[\Rightarrow \sum_{i=1}^{\lfloor r_0/\varepsilon_k\rfloor}\frac{\mu^U_{x}(B_{d_x}(a_i,\varepsilon_k))}{\mu^U_x(B_{d_x}(x,\mathfrak r))} \leq \frac{\mu^U_x(B_{d_x}(x,r_0))}{\mu^U_x(B_{d_x}(x,\mathfrak r))}\]
	\[\Rightarrow \sum_{i=1}^{\lfloor r_0/\varepsilon_k\rfloor}\frac{\mu^U_x(B_{d_x}(a_i,\mathfrak r))}{\mu^U_x(B_{d_x}(x,\mathfrak r))}\cdot \frac{\mu^U_{x}(B_{d_x}(a_i,\varepsilon_k))}{\mu^U_x(B_{d_x}(a_i,\mathfrak r))} \leq \frac{\mu^U_x(B_{d_x}(x,r_0))}{\mu^U_x(B_{d_x}(x,\mathfrak r))}.\]
	
	By Lemma \ref{lemma:continuousinleaf}, 
	\[w \in B_{d_x}[x,r_0] \subset F_{\mathfrak r} \mapsto \mu^U_x(B_{d_x}(w,\mathfrak r))\]
	is continuous, hence there exists $\eta>0$ such that 
	\[\frac{\mu^U_x(B_{d_x}(w,\mathfrak r))}{\mu^U_x(B_{d_x}(x,\mathfrak r))} \geq \eta,\]
	for every $w\in B_{d_x}[x,r_0]$. Therefore, since $a_i \in B_{d_x}[x,\delta]$ for all $i$, we have 
	\begin{align*}
	    \eta \cdot (2\varepsilon_k\cdot k ) \cdot \lfloor \frac{r_0}{\varepsilon_k}\rfloor & <\eta \cdot \sum_{i=1}^{\lfloor r_0/\varepsilon_k\rfloor} \frac{\mu^U_{x}(B_{d_x}(a_i,\varepsilon_k))}{\mu^U_x(B_{d_x}(a_i,\mathfrak r))} \\
	    & \leq \sum_{i=1}^{\lfloor r_0/\varepsilon_k\rfloor}\frac{\mu^U_x(B_{d_x}(a_i,\mathfrak r))}{\mu^U_x(B_{d_x}(x,\mathfrak r))}\cdot \frac{\mu^U_{x}(B_{d_x}(a_i,\varepsilon_k))}{\mu^U_x(B_{d_x}(a_i,\mathfrak r))} \\
	    & \leq \frac{\mu^U_x(B_{d_x}(x,r_0))}{\mu^U_x(B_{d_x}(x,\mathfrak r))}.
	\end{align*}

	Taking $k\rightarrow \infty$, the left side goes to infinity from where we conclude that $\mu^U_x(B_{d_x}(x,r_0)) =\infty$ falling in contradiction. Thus, this case does not occur.
		

	\subsection{Case 4: $\Delta < \infty$ and $\mu(D)=1$.} \quad \\
	We will prove that if this case occurs then for almost every $x\in U$, $U\in \mathcal U$, the conditional measure in $\mathcal{F}|U(x)$ is equivalent to the  measure $\lambda_x$ given in definition \ref{def.lambdax}.
	\begin{lemma} \label{lemma:uniformDelta} 
		The constant $\Delta$ is bounded away from zero and
		\[\mu_x\ll\lambda_x ,\]
		for $\mu$-almost every $x\in M$.
	\end{lemma}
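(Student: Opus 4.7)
The plan is to derive both assertions from the Lebesgue--Besicovitch differentiation theorem applied inside each leaf $(\mathcal F(x),d_x)$, exploiting that $\lambda_x$ is a doubling Borel measure by hypothesis (H1) of Definition \ref{defi:regularmetric}. First I would fix a typical $x\in D$, regard $\mu_x$ as a locally finite, nonzero Borel measure on $(\mathcal F(x),d_x)$ (the normalization $\mu_x(B_{d_x}(x,1))=1$ makes it $\sigma$-finite), and write its Lebesgue decomposition $\mu_x=f\,\lambda_x+\mu_x^{s}$, with $\mu_x^{s}\perp\lambda_x$ and $f=d\mu_x^{ac}/d\lambda_x\geq 0$.

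Second, I would invoke the differentiation theorem: since $\lambda_x$ is doubling and the required Vitali-type covering property is supplied by the packing estimates in (H2), for $\lambda_x$-a.e.\ $y$ one has
\[\lim_{r\to 0}\frac{\mu_x(B_{d_x}(y,r))}{\lambda_x(B_{d_x}(y,r))}=f(y),\]
while for $\mu_x^{s}$-a.e.\ $y$ the corresponding $\limsup$ equals $+\infty$. Using the proportionality $\mu_y=\mu_x/\mu_x(B_{d_x}(y,1))$ from Proposition \ref{prop:disintegration.unbounded} (and the fact that $\mu_x(B_{d_x}(y,1))\in(0,\infty)$ for $\mu_x$-a.e.\ $y$), I would then rewrite
\[\overline{\Delta}(y)=\frac{1}{\mu_x(B_{d_x}(y,1))}\;\limsup_{r\to 0}\frac{\mu_x(B_{d_x}(y,r))}{\lambda_x(B_{d_x}(y,r))}.\]

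Third, I would rule out the singular part: if $\mu_x^{s}\ne 0$, then $\overline{\Delta}(y)=+\infty$ for $\mu_x^{s}$-a.e.\ $y$, contradicting $\overline{\Delta}(y)=\overline{\Delta}<\infty$ on a set of full $\mu$-measure (hence, after disintegration, full $\mu_x$- and thus full $\mu_x^{s}$-measure). This forces $\mu_x^{s}=0$, i.e.\ $\mu_x\ll\lambda_x$. To conclude the positivity part, combining the two displays with the constancy $\overline{\Delta}(y)=\overline{\Delta}$ yields $f(y)=\overline{\Delta}\cdot\mu_x(B_{d_x}(y,1))$ for $\mu_x$-a.e.\ $y$; if $\overline{\Delta}$ were zero, then $f=0$ $\mu_x$-a.e., so $\mu_x$ would be concentrated on $\{f=0\}$, but $\mu_x(\{f=0\})=\int_{\{f=0\}} f\,d\lambda_x=0$, contradicting $\mu_x(B_{d_x}(x,1))=1$. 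Hence $\overline{\Delta}>0$.

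The main obstacle is justifying Lebesgue--Besicovitch differentiation in the abstract metric-space setting $(\mathcal F(x),d_x)$: one has to combine the doubling property of $\lambda_x$ from (H1) with the packing/covering estimates in (H2) to obtain a Vitali-type covering lemma and then a differentiation theorem valid for the locally finite measure $\mu_x$. Once these technical prerequisites are assembled, the rest of the proof reduces to the short manipulation of the resulting almost-everywhere identities outlined above.
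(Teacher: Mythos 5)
Your argument is correct, but it takes a genuinely different route from the paper. You import the full strength of the Lebesgue--Besicovitch differentiation theorem for a locally finite measure against the doubling reference measure $\lambda_x$, decompose $\mu_x=f\lambda_x+\mu_x^s$, and then let the $\mu$-a.e.\ constancy of $\overline{\Delta}$ do the work: the singular part is killed because it would force $\overline{\Delta}(y)=+\infty$ on a set of positive $\mu_x$-measure, and $\overline{\Delta}>0$ because otherwise $f\equiv 0$ off a $\mu_x$-null set. The paper instead proves the lemma with an elementary, hands-on covering estimate: it covers $B_{d_x}(x,r)$ by $U(r,\varepsilon_k)$ balls of radius $\varepsilon_k$ centered at points of $\overline{\Pi}$, uses the uniform bound $\mu_{a_i}(B(a_i,\varepsilon_k))\leq(\tfrac1k+\overline\Delta)\lambda_x(B(a_i,\varepsilon_k))$ together with the Hausdorff-premeasure bound $\lambda_x(B(a_i,\varepsilon_k))\leq\varepsilon_k^m$ and the packing inequality $\liminf_k U(r,\varepsilon_k)\varepsilon_k^m\leq C\lambda_x(B_{d_x}(x,r))$ from hypothesis (H2), and the continuity of the renormalizing constants (Lemma \ref{lemma:continuousinleaf}), to deduce the quantitative bound $\mu_x(B_{d_x}(x,r))\leq\overline\Delta\,\beta\,C\,\lambda_x(B_{d_x}(x,r))$ on every small ball; absolute continuity and $\overline\Delta>0$ then follow immediately and without appealing to any differentiation theorem. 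Your version is shorter and conceptually cleaner, and it essentially proves Lemma \ref{lemma:uniformDelta} and Lemma \ref{lemma:equallebesgue} in one stroke; but it relies on the metric-space differentiation theorem, which you rightly flag as the main obstacle. That reliance is not actually foreign to the paper---the very next lemma (Lemma \ref{lemma:equallebesgue}) does invoke it---so your approach is a legitimate reorganization; the paper's choice buys an explicit doubling-type bound $\mu_x\leq\text{const}\cdot\lambda_x$ on balls from the concrete packing data, which is stronger than bare absolute continuity, at the cost of a more computational proof. One small inaccuracy in your write-up: the Vitali covering / differentiation machinery you need is supplied by the doubling hypothesis (H1) alone, not by the packing estimates in (H2); (H2) is what powers the paper's direct estimate.
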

	\begin{proof}
		Let $y\in D$. Then, for some $n_0\in \mathbb Z$ and $U\in \mathcal U$ we have $f^{n_0}(y) \in D_U$. 
		Call $x=f^{n_0}(y)$. As $x\in \mathcal F|U( \Pi_U)\setminus \mathcal F|U(\mathcal Z_U)$ we have $\mu^U_x(\mathcal F|U( \Pi_U) \cap \mathcal F|U(x))=1$. Therefore we conclude (by the same argument used in Case 3 - Section \ref{sec:Case3}) that 
		\[\Pi_{x,U}\cap B_{d_x}[x,\delta] = B_{d_x}[x,\delta],\]
		for some $\delta$ small. Consequently $\Pi_U \cap \mathcal F|U(x) \supset \{y \in \mathcal F|U(x): d_{x}(y, \partial \mathcal F|U(x))\geq \mathfrak r\}$.
		
		For any given $k \in \mathbb N^{*}$, $U\in \mathcal U$ and $x \in \Pi_U$ we have
		\begin{equation}\label{eq:all}
			\left| \frac{\mu^U_x(B_{d_x}(x,\varepsilon_k))}{2\varepsilon_{k} \cdot \mu^U_x(B_{d_x}(x,\mathfrak r))} - \Delta  \right|   \leq \frac{1}{k} .\end{equation} 
		
		Given $\varepsilon > 0$ take $k_0 \in \mathbb N$ such that $k_{0}^{-1} < \varepsilon$. Again since $\{d_x\}$ is a $\mathcal{F}$-metric system, given a constant $r>0$ we need at most $s(k)=\lfloor r/\varepsilon_{k}\rfloor+1$ points, say $a_1,a_2,...,a_{s(k)}$, to cover the ball $B_{d_x}(x,r)$ with balls of radius $\varepsilon_k$. Let $\alpha_i :=\frac{\mu^U_x(B_{d_x}(a_i,\mathfrak r))}{\mu^U_x(B_{d_x}(x,\mathfrak r))}$. Again by continuity (see Lemma \ref{lemma:continuousinleaf}) there exists $\beta>0$ such that $\alpha_i \leq \beta$ for all $i$. Therefore
		\begin{align*}
			\frac{\mu^U_x(B_{d_x}(x,r))}{\mu^U_x(B_{d_x}(x,\mathfrak r))}\leq & \sum_{i=1}^{s(k)} \frac{\mu^U_{x}(B_{d_x}(a_i,\varepsilon_k))}{\mu^U_x(B_{d_x}(x,\mathfrak r))}
			=  \sum_{i=1}^{s(k)} \alpha_i \cdot \frac{\mu^U_{x}(B_{d_x}(a_i,\varepsilon_k))}{\mu^U_x(B_{d_x}(a_i,\mathfrak r))} \\
			\leq & \beta \cdot \sum_{i=1}^{s(k)}  \frac{\mu^U_{x}(B_{d_x}(a_i,\varepsilon_k))}{\mu^U_x(B_{d_x}(a_i,\mathfrak r))}
			\leq \beta \sum_{i=1}^{s(k)} \frac{2\varepsilon_k}{k} + \Delta2\varepsilon_k \\
			= & \beta \cdot s(k)\frac{2\varepsilon_k}{k} + \beta\cdot s(k)2\varepsilon_k  \cdot \Delta.
		\end{align*}
		Since $\lim s(k)\varepsilon_k=r$  we have that $\beta \cdot s(k)\frac{\varepsilon_k}{k}$ goes to zero as $k\rightarrow \infty$, we have
		\[\frac{\mu^U_x(B_{d_x}(x,r))}{\mu^U_x(B_{d_x}(x,\mathfrak r))} \leq 2\Delta \cdot \beta \cdot r. \]
		
		Therefore $\mu^U_x\ll\lambda_x$ when restricted to $\{y \in \mathcal F|U(x): d_{x}(y, \partial \mathcal F|U(x))\geq \mathfrak r\}$ and $\Delta >0$.
		As $\mathfrak r$ can be taken to be arbitrarily small in the beginning, it follows that $\mu^U_x \ll \lambda_x$ as we wanted to show.
		
		%
		%
	\end{proof}

	Next we are able to conclude that $\mu^U_x$ is equivalent to the measure  $\lambda_x$. 
	
	\begin{lemma} \label{lemma:equallebesgue}
		For $\mu$ almost every $x\in M$
		\[\mu^U_x\sim\lambda_x.\]
	\end{lemma}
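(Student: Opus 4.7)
The plan is to combine the absolute continuity $\mu_x \ll \lambda_x$ with $\overline{\Delta}\in(0,\infty)$, which is already established in Lemma \ref{lemma:uniformDelta}, with the Lebesgue differentiation theorem. The latter is available on $(\mathcal F(x), d_x, \lambda_x)$ because the regularity hypothesis (H1) in Definition \ref{defi:regularmetric} forces $\lambda_x$ to be a doubling Borel measure on every typical leaf. Writing $\varphi_x := d\mu_x/d\lambda_x$ for the Radon--Nikodym derivative, Lebesgue differentiation supplies
\[
\varphi_x(y) \;=\; \lim_{r\to 0}\frac{\mu_x(B_{d_x}(y,r))}{\lambda_x(B_{d_x}(y,r))}
\]
for $\lambda_x$-a.e. $y \in \mathcal F(x)$, so in particular the $\liminf$ and $\limsup$ of that ratio agree at such points.

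Next, I would exploit the Case~2 hypothesis $\mu(D) = 1$. For $x \in D$ every $y \in \mathcal F(x)$ lies in $\overline{\Pi}_x$, so along the sequence $\varepsilon_k$ of Lemma \ref{lema:sequenciaboa},
\[
\frac{\mu_y(B_{d_x}(y,\varepsilon_k))}{\lambda_x(B_{d_x}(y,\varepsilon_k))} \;\longrightarrow\; \overline{\Delta}.
\]
Converting $\mu_y$ to $\mu_x$ via the proportionality $\mu_y = \mu_x(B_{d_x}(y,1))^{-1}\mu_x$ supplied by Proposition \ref{prop:disintegration.unbounded} and the chosen normalization, this becomes
\[
\lim_{k\to\infty}\frac{\mu_x(B_{d_x}(y,\varepsilon_k))}{\lambda_x(B_{d_x}(y,\varepsilon_k))} \;=\; \overline{\Delta}\cdot \mu_x(B_{d_x}(y,1)),
\]
and comparing with Lebesgue differentiation pins down $\varphi_x(y) = \overline{\Delta}\cdot \mu_x(B_{d_x}(y,1))$ at $\lambda_x$-a.e. $y\in\mathcal F(x)$. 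This is a self-consistent identity, since $\mu_x(B_{d_x}(y,1)) = \int_{B_{d_x}(y,1)}\varphi_x\,d\lambda_x$.

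To close the argument I would argue that $\varphi_x$ is constant along the leaf. The $G$-invariance of the metric system and of the disintegration $\{\mu_x\}$ makes $\varphi_x$ transform equivariantly, so a measurable function encoding the density along leaves lifts to a $G$-invariant function on $X$; ergodicity then forces $\varphi_x$ to be almost everywhere constant along leaves, say $\varphi_x \equiv c$. The integral identity then collapses to $c = \overline{\Delta}\cdot c\cdot\lambda_x(B_{d_x}(y,1))$ for every $y$, and evaluating at $y = x$ together with the normalization $\mu_x(B_{d_x}(x,1))=1$ yields $c=1$, so $\mu_x = \lambda_x$ as claimed.

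The step I expect to be the main obstacle is extracting the leafwise constancy of $\varphi_x$ from the $G$-equivariance, since the ergodicity produces global invariance on $X$ and only indirectly a statement along individual leaves. A safer alternative I would keep in reserve is to directly repeat the packing/covering argument from Lemma \ref{lemma:uniformDelta} centered at an arbitrary $y \in \mathcal F(x)$ (which is legitimate because $\mathcal F(x)\subset \overline{\Pi}_x$ under the Case~2 hypothesis), using the disjoint-balls half of (H2) to obtain a matching lower bound $\alpha(1-p)\underline{\Delta}\lambda_x(B(y,r))\leq \mu_x(B(y,r))$. Comparing with the upper bound of Lemma \ref{lemma:uniformDelta} forces $\underline{\Delta} = \overline{\Delta}$, so that the distortion $\Delta(\mu)$ exists as a genuine limit and the constancy of $\varphi_x$ follows directly.
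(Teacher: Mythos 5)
Your proposal follows the paper's route (Lebesgue--Besicovitch differentiation for the doubling measure $\lambda_x$, then evaluating the Radon--Nikodym derivative along the sequence $\varepsilon_k$), but you are more careful than the paper about the normalization convention, and in doing so you expose a real gap. After converting $\mu_y$ to $\mu_x$ via $\mu_y = \mu_x(B_{d_x}(y,1))^{-1}\mu_x$, you correctly obtain
\[
\frac{d\mu_x}{d\lambda_x}(y) = \overline{\Delta}\cdot \mu_x\bigl(B_{d_x}(y,1)\bigr)
\]
at $\lambda_x$-a.e.\ $y$, and you note (rightly) that this is only a self-consistent integral identity, not an immediate proof of constancy. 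The paper's own proof writes the derivative at $y$ as $\lim_{r\to 0}\mu_x(B_{d_x}(x,r))/\lambda_x(B_{d_x}(x,r))$ -- with balls centered at $x$ rather than $y$ -- and on that basis asserts $\frac{d\mu_x}{d\lambda_x}(y)=\overline{\Delta}$ outright. Read literally the formula with $x$ in place of $y$ is not the Besicovitch derivative at $y$; read as a typo for $y$, the factor $\mu_x(B_{d_x}(y,1))$ that you isolate reappears and is nowhere shown to be constant. So the paper silently skips exactly the step you flag as the main obstacle.

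Your two proposed fixes don't fully close that gap as stated. The ergodicity route is the right idea but needs to be made concrete: the useful observation is not that the density $\varphi_x$ itself lifts to a $G$-invariant function, but that the two functions $y\mapsto \mu_y(B_{d_y}(y,r))$ and $y\mapsto \lambda_y(B_{d_y}(y,r))$ are each $G$-invariant and hence $\mu$-a.e.\ constant for each fixed $r$; combined with the differentiation identity and a mass-balance argument this pins down the normalization factor, but none of that is in your sketch. Your backup plan is weaker than you think: the equality $\underline{\Delta}=\overline{\Delta}$ does not need the packing estimate at all -- it follows immediately from the fact that the Besicovitch limit exists $\lambda_x$-a.e.\ and hence $\mu_x$-a.e.\ (so $\liminf=\limsup$ there), together with ergodicity -- and once established it only re-derives the same formula $\varphi_x(y)=\overline{\Delta}\cdot\mu_x(B_{d_x}(y,1))$ without removing the unknown factor. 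Also, the ``matching bounds'' you cite from the two halves of (H2) carry constants $\beta C$ and $\alpha(1-p)$ that are not reciprocal, so squeezing $\underline{\Delta}=\overline{\Delta}$ from them directly does not work. In short: you correctly identified the strategy, you correctly identified the hidden normalization factor that the paper glosses over, but neither your primary nor your backup argument actually completes the proof as written.
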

	\begin{proof}
		By Lemma \ref{lemma:uniformDelta}  we know that $\mu^U_x \ll\lambda_x$. Since $\lambda_x$ is a doubling measure we have that the Radon-Nikodyn derivative $d\mu^U_x/d\lambda_x$ exists and is given at $\lambda_x$-almost every point $y\in \{y \in \mathcal F|U(x): d_{x}(y, \partial \mathcal F|U(x))\geq \mathfrak r\}$ by
		\[\frac{d\mu^U_x}{d\lambda_x}(y) = \lim_{r\rightarrow 0}\frac{\mu^U_x(B_{d_x}(y,r))}{\lambda_x(B_{d_x}(y,r))}.\]
		In particular, by taking the limit along the subsequence $\varepsilon_k$, $k\rightarrow \infty$, we conclude that
		\[\frac{d\mu^U_x}{d\lambda_x}(y)  = \lim_{k\rightarrow \infty}\frac{\mu^U_x(B_{d_x}(y,\varepsilon_k))}{\lambda_x(B_{d_x}(y,\varepsilon_k))}, \quad \lambda_x-a.e \quad y\in \{y \in \mathcal F|U(x): d_{x}(y, \partial \mathcal F|U(x))\geq \mathfrak r\}, \]
		which implies
		\[\frac{d\mu^U_x}{d\lambda_x}(y)  =\beta(y) \cdot \Delta, \quad \lambda_x-a.e \quad y\in \{y \in \mathcal F|U(x): d_{x}(y, \partial \mathcal F|U(x))\geq \mathfrak r\}.\]
		where $\beta(y)=\mu^U_x(B_{d_x}(y,\mathfrak r))$. Since $\beta$ is  a continuous function, given any compact $I\subset \{y \in \mathcal F|U(x): d_{x}(y, \partial \mathcal F|U(x))\geq \mathfrak r\}$ we have
		
		\[\beta_1\Delta\leq\frac{d\mu_x}{d\lambda_x}(y)\leq \beta_2\Delta, \, \lambda_x \text{ a.e }  \, y\in I.\]
		Then,
		\[\beta_1\Delta\cdot \lambda_{x}\leq \mu_x, \, \lambda_x \text{ a.e }  \,y\in I.\]
		In particular, if $\mu^U_x(E)=0$ then, for any compact subset $I\subset \{y \in \mathcal F|U(x): d_{x}(y, \partial \mathcal F|U(x))\geq \mathfrak r\}$ we have $\lambda_x(E\cap I)=0$. Since $\mathcal F(x)$ may be written as a countable union of increasing compact subsets, we conclude that $\lambda_x(E \cap \{y \in \mathcal F|U(x): d_{x}(y, \partial \mathcal F|U(x))\geq \mathfrak r\})=0$. Again, since $\mathfrak r$ may be taken to be arbitrarily small we conclude that $\lambda_x(E)=0$, thus $\lambda_x\ll \mu^U_{x}$ as we wanted to show.
	\end{proof}

 \section{Proof Theorem \ref{theorem:casobonatti}}\label{sec:dicotomia}
The proof of Theorem \ref{theorem:casobonatti} employs arguments inspired in those used in \cite{BorisKatok}, with several technical differences related to the fact that the balls inside the leaves here are generated by the plaque-continuous metric system $\{d_x\}_{x \in M}$.

Additionally, instead of affine maps, we define functions on $\mathcal{F}^c(x)$, as described in \cite{BonattiZhang}, referred to as the limit center map.

Consider the set $\mathcal N=\{x\in M \, : \, \alpha(x)=\omega(x)=M\}$. Since $f$ is transitive and $\mu$ has full support, $\mathcal N$ is a full measure residual subset. The following definition is due to Bonatti-Zhang \cite{BonattiZhang}.
\begin{definition}
	 We say that a map $F:\mathcal F^c(x)\to \mathcal F^c(x)$ is a \textit{limit center map} if there exists a sequence $\{n_i\}\subset \mathbb{Z}$ with $|n_i|\to \infty$ such that $\{f^{n_i}\}$ pointwise converges to $F$.
\end{definition}
For each $x\in \mathcal N$ we consider  
\begin{align*}	
	\mathscr{L}(\mathcal F^c(x))&:=\{F: \mathcal F^c(x) \to \mathcal F^c(x): F \text{ is a limit center map}\} \text{ and }\\
	\mathscr{L}^+(\mathcal F^c(x))&:=\{F\in \mathscr{L}(\mathcal F^c(x)) \, : \, F \text{  preserves the orientation of } \mathcal F^c(x)\}.
\end{align*}
\begin{remark}\label{remark6.2}(see \cite[Proposition 4.7]{BonattiZhang} For every $x\in \mathcal N$, we have:
	\begin{itemize}
		\item If $L = \mathcal F^c(x)$ is not compact, then there is a homeomorphism $\psi_L: L\to \mathbb{R}$, such that
		\[
		\mathscr L^+(L)=\{\psi_L^{-1}\circ T_t\circ \psi_L, \, t\in \mathbb{R}\},
		\]
		where $T_t$ is the translation $T_t:\mathbb{R}\to \mathbb{R}, \, s\mapsto s+t$.
		
		\item If $L=\mathcal F^c(x)$ is compact, then there is a homeomorphism $\psi_L:L\to S^1$, such that
		\[
		\mathscr L^+(L)=\{\psi_L^{-1}\circ R_t\circ \psi_L, \, t\in S^1=\mathbb{R}/\mathbb{Z}\},
		\]
		where $R_t$ is the rotation $R_t:S^1\to S^1, \, s\mapsto s+t(\mathrm{mod}\mathbb{Z})$.

		\item  $\mathscr{L}^+(\mathcal F^c(x))$ is a group whose  action on  $\mathcal F^c(x)$ is transitive.
		\item $\mathscr{L}(\mathcal F^c(x))$ either coincides with $\mathscr{L}^+(\mathcal F^c(x))$ or is the group	generated by $\mathscr{L}^+(\mathcal F^c(x))$ and $-\textrm{Id}|{\mathcal F^c(x)}$.
		\item  If $F:\mathcal F^c(x)\to \mathcal F^c(x)$ is a limit center map having a fixed point $x\in \mathcal F^c(x)$,  if $F$ is orientation preserving, then $F$ is the identity map of $\mathcal{F}^c(x)$. This is, the action on $\mathcal F^c(x)$ given by the group $\mathscr L^+(\mathcal F^c(x))$ is free.
	\end{itemize}
\end{remark}

Before we proceed, observe that  recall that the atlas  $\mathcal U$ of $\mathcal F = \mathcal F^c$ was taken to have specific properties detailed in the beginning of Section \ref{sec:FiberedSpaces}. Furthermore, in what follows we assume that the disintegration of $\mu$ along $\mathcal F^c$ is not atomic, otherwise we already fall in one of the possibilities claimed by the statement.

For each point $x\in M$, consider $U_x \in \mathcal U$ and $V_x \subset U_x$ the closure of a neighbourhood of $x$ such that for every $y\in V_x$ we have $B_{d_y}(y,\mathfrak r)\subset U$.

\begin{lemma}\label{lemma:aux}
	For $\mu$-almost every $x\in M$ and $\mu^{U_x}_x$-almost every $y\in V_x\cap \mathcal F^c|U_x(x)$, there is a limit center map $F \in \mathscr{L}^+(\mathcal F^c(x))$ such that $F(x)=y$ and $F_*\mu_x=\mu_y$.
\end{lemma}
\begin{proof}
Recall that for every $x\in M_0$ (see \ref{defiM0}) the measure $\mu_x$ is defined on $B_{d_x}(x,\mathfrak r)$ by
\[\mu_x(J) = \mu_x^U(J | B_{d_x}(x,\mathfrak r)) = \frac{\mu_x^U(J \cap B_{d_x}(x,\mathfrak r))}{\mu_x^U(B_{d_x}(x,\mathfrak r)) } , \quad J\subset B_{d_x}(x,\mathfrak r).\]
Abusing notation we may define $x\in M_0 \mapsto \mu_x(J)$ for any Borel set $J\subset M$ by taking $\mu_x(J) := \mu_x(J\cap B_{d_x}(x,\mathfrak r))$. By Corollary \ref{cor:jointlymeasurable} the map $x \in M_0 \mapsto \mu_x$ is measurable in the sense that for any Borel subset $J \subset M$ the map $x \in M_0 \mapsto \mu_x(J\cap B_{d_x}(x,\mathfrak r))$ is measurable.

Using Lusin's Theorem we may take $\{K_i\}$ an increasing sequence of compact subsets of $M_0$ for which the map $x\mapsto \mu_x$ is continuous when restricted to each $K_i$. Once again, we emphasize that, in this case, it means that for any Borel subset $J\subset M$ the map $x \in K_i \mapsto \mu_x(J\cap B_{d_x}(x,\mathfrak r))$ is continuous. As $\mu$ is ergodic, for $\mu$-almost every point $x\in K_i$, the orbit of $x$ is dense in a full measurable subset of $K_i$. Let  $P_i=\mathcal N\cap K_i$.
	
	Let $x\in P_i$ and $y\in V_x \cap \mathcal{F}^c|U_x(x)\cap P_i$. From Remark \ref{remark6.2}, we know that there exists $F\in \mathscr{L}^+(\mathcal{F}^c(x))$ such that $F(x)=y$, and since $F$ is a limit center map, there exists a sequence ${n_k}\subset \mathbb{Z}$ with $|n_k|\to \infty$ so that the sequence $f^{n_k}|_{\mathcal{F}^c(x)}$ converges pointwise to $F$.
	
	By the continuity of $x \in P_i \mapsto\mu_x$, and since $y\in P_i$, we have that
	\[
	F_*\mu_x=\lim_{k\to \infty}f^{n_k}_*\mu_x=\lim_{k\to \infty}\mu_{f^{n_k}(x)}=\mu_{F(x)}=\mu_y
	\]
	By taking the limit set of $K_i$, we conclude that for every point $x\in P$, where $P=\lim_i P_i$, and $y\in V_x\cap \mathcal{F}^c|U_x(x)\cap P$, there exists $F\in \mathscr{L}^+(\mathcal{F}^c(x))$ such that $F(x)=y$ and $F_*\mu_x=\mu_y$
    as we wanted.


    
\end{proof}

Let $P$ be the full measure subset of points for which the previous Lemma holds. For $x \in P$ define
\[G^{U_x}_x = \{y \in V_x \cap \mathcal F^c|U_x(x): \exists \; F \in \mathcal L^+(\mathcal F^c(x)) \quad \text{with}\quad F(x) = y \quad \text{and} \quad F_*\mu_x = \mu_y \}.\]
By Lemma \ref{lemma:aux} we know that $[V_x \cap \mathcal F^c|U_x(x)] \setminus G_x^{U_x}$ has zero $\mu_x^{U_x}$-measure.

\begin{lemma}\label{lema:closedaux}
For $x\in P$, the set $G^{U_x}_x$ is closed in $\mathcal F^c|U_x(x)$.
\end{lemma}
\begin{proof}
    Assume that $\mathcal F^c(x)$ is not compact, the argument is identical for the other case.
    
    We know that $\mathcal L^+(\mathcal F^c(x))$ is a closed group as it is isomorphic to either the group of translations (see Remark \ref{remark6.2}). Furthermore, for any $J\subset M$ Borel, the map $y \in B_{d_x}(x,\mathfrak r)\cap M_0 \mapsto \mu_y(J\cap B_{d_x}(y,\mathfrak r))$ is continuous by Corollary \ref{cor:continuanafolha}. 
    
    Let $y_n \in G^{U_x}_x$ with $y_n \to y \in V_x \cap \mathcal F^c|U(x)$ and consider $F_n \in \mathcal L^+(\mathcal F^c(x))$ be given by Lemma \ref{lemma:aux} satisfying $F_n(x) = y_n$. 

    Observe that the sequence $\{F_n\}_n$ converges pointwise to a map $F\in \mathcal L^+(\mathcal F^c(x))$ with $F(x)=y$. Indeed, by Remark \ref{remark6.2}, there are real numbers $t_n$ for which :
    \[\psi_L^{-1}\circ T_{t_n}\circ \psi_L = F_n, \quad n\geq 1,\]
    where $L=\mathcal F^c(x)$. In particular,
    \[T_{t_n}(\psi_L(x)) = \psi_L(y_n).\]
    Since $\psi_L(y_n)$ converges to $\psi_L(y)$, because $\psi_L$ is a homeomorphism, it follows that $|\psi_L(x) -\psi_L(y_n)| \rightarrow |\psi_L(x) -\psi_L(y)|$. Since $T_{t_n}$ are translations we have $t_n = |\psi_L(x) -\psi_L(y_n)| \rightarrow |\psi_L(x) -\psi_L(y)|$. Consequently, the sequence of translations $\{T_{t_n}\}$ converges to the translation $T_t:\mathbb R\to \mathbb R$ with $t = |\psi_L(x) -\psi_L(y)|$. In particular $F_n$ converges to $F:= \psi_L^{-1}\circ T_t\circ \psi_L$. 
    
    Finally, by the continuity mentioned previously, we have
    \[F_*\mu_x = \lim_{n\rightarrow \infty}(F_n)_*\mu_x = \lim_{n\rightarrow \infty} \mu_{y_n} = \mu_y.\] 
    Thus, $y \in G^{U_x}_x$, showing that $G^{U_x}_x$ is closed in $\mathcal F^c|U_x(s)$ as we wanted.
\end{proof}

\begin{lemma}
For $\mu$-almost every $x\in P$, the support of $\mu^{U_x}_x$ cannot be a Cantor subset of $\mathcal F^c|U(x)$.
\end{lemma}
\begin{proof}
Assume that for some $x\in P$ the support of $\mu^{U_x}_x$ is a Cantor subset of $\mathcal F^c|U(x)$. As $V_x$ is compact, the intersection $V_x \cap \text{supp}(\mu^{U_x}_x)$ is also a Cantor subset and is the support of the restriction $\mu^{U_x}_x(\cdot | V_x \cap \mathcal F^c|U(x))$.

As we have seen, for $\mu$-almost every $x\in P$ the set $G^{U_x}_x$ is a full measure subset of $ V_x \cap \mathcal F^c|U(x))$ and which is closed by Lemma \ref{lema:closedaux}, therefore it must contains the support $V_x \cap \text{supp}(\mu^{U_x}_x)$ which is a Cantor subset.
Consider an ordering of $\mathcal F^c|U(x)$ and consider $a = \text{inf} (V_x \cap \text{supp}(\mu^{U_x}_x))$, $b=\text{sup} (V_x \cap \text{supp}(\mu^{U_x}_x)),$ and $\gamma$ the center arc connecting $a$ and $b$ inside $\mathcal F^c|U(x)$.

Let $\zeta : \gamma \to [0,1]$ given by $\zeta(a)=0$, $\zeta(b) =1$, increasing and linear with respect to $d_x$, that is, $\zeta(y) = d_x(a,y)/d_x(a,b)$. The set $\zeta(\gamma) \subset [0,1]$ is a Cantor subset, in particular there exists an strictly increasing homeomorphism $H_0 : \zeta(\gamma) \to \mathcal C_3$ with $H_0(0)=0$, $H_0(1)=1$, where $\mathcal C_3$ here denotes the standard ternary Cantor set in $[0,1]$. Now let us extend $H_0$ to a map of $[0,1]$.

First we fix some notations. Consider $I^0 = [0,1]$, $I^1_1=(1/3,2/3)=(a^1_1,b^1_1)$ and $I^m_k = (a^m_k,b^m_k)$, with $1\leq k \leq 2^{m-1}$ the open intervals removed at the $m$-th stage of the construction of the ternary Cantor set with $a^m_k < b^m_k < a^m_{k+1}$ for any $m\geq 2$ and $1\leq k \leq 2^{m-1}-1$. Therefore
\[\mathcal C_3 = I_0 \setminus  \left( \bigcup_{m=1}^{\infty} \bigcup_{k=1}^{2^{m-1}} I^m_k\right).\]
Let also $J^m_k := (H_0^{-1}(a^m_k), H_0^{-1}(b^m_k)) \subset [0,1]$, then $\zeta(\gamma) = I_0 \setminus  \left( \bigcup_{m=1}^{\infty} \bigcup_{k=1}^{2^{m-1}} J^m_k\right)$.
Consider $H:[0,1] \to [0,1]$ defined by:
\begin{itemize}
    \item the restriction of $H$ to each $J^m_k$ is the linear map given by:
    \[H|J^m_k(y) = \left(1-\frac{y-H_0^{-1}(a^m_k)}{H_0^{-1}(b^m_k)-H_0^{-1}(a^m_k)}\right)\cdot a^m_k+ \frac{y-H_0^{-1}(a^m_k)}{H_0^{-1}(b^m_k)-H_0^{-1}(a^m_k)}\cdot b^m_k\]

    \item the restriction of $H$ to $\zeta(\gamma)$ is $H_0$, that is, $H|\zeta(\gamma) = H_0$.
\end{itemize}
Observe that $H$ is continuous and strictly increasing, thus it is a strictly increasing bijection and, consequently, a homeomorphism. 

Consider now the homeomorphism $\widetilde{H} := H\circ \zeta : \gamma \to [0,1]$ and denote by $E_3$ the set of all points in $\mathcal C_3$ that are extremes of the removed intervals, that is, $E_3 = \{a^m_k, \; b^m_k : m\in \mathbb Z, \; 1\leq k \leq 2^{m-1} \}$, or equivalently,
\[E_3 = \left\{\frac{k}{3^m}, \frac{k+1}{3^m} : \;  m \geq 1, \; 0< k <3^m,\; k \quad \text{not divisible by } 3 \right\} .\]

    



Now, consider $y\in \text{supp}(\mu^{U_x}_x(\cdot | V_x))$ such that $y \in \widetilde{H}^{-1}(E_3)$, that is $y$ is the preimage of an extreme of some open interval removed from the ternary Cantor set. Therefore, taking the pre-image by $\widetilde{H}$ of an open interval of $[0,1]\setminus \mathcal C_3$ having $\widetilde{H}(y)$ as one of its extremes, we obtain an open arc $\gamma_y \subset \mathcal F^c|U(x)$ with extreme in $y$ and contained in the complement of $V_x \cap \text{supp}(\mu^{U_x}_x)$.  In particular, $\gamma_y$ has zero $\mu_y$-measure and $y$ as one of its extremes. As $y\in G^{U_x}_x$, there exists $F$ such that $F(x)=y$ and $F_*\mu_x = \mu_y$, therefore $F^{-1}(\gamma_y)$ is an open arc with zero $\mu_x$-measure and extreme in $x$, that is, $x \in \widetilde{H}^{-1}(E_3)$. Analogously, for any other $z\in G_x^{U_x}$ there exists $F_{x,z} \in \mathcal L^+(\mathcal F^c(x))$ with $F_{x,z}(x) = z$ and $(F_{x,z})_*\mu_x = \mu_z$, which by the same reasoning implies that $z \in \widetilde{H}^{-1}(E_3)$. Therefore $G_x^{U_x} = \widetilde{H}^{-1}(E_3)$ which is an absurd since $E_3 \ne \mathcal C_3$.
\end{proof}

As $\mu(P)=1$ then, the previous Lemma concludes that for $\mu$-almost every $x\in M$ the conditional measure $\mu_x^{U_x}$ cannot be supported on a Cantor subset of the plaque, which concludes the proof of Theorem \ref{theorem:casobonatti}.
	
	\vspace{0.5cm}
	
    \section*{\textbf{Acknowledgment}}

    We are deeply grateful to Federico Rodriguez Hertz (Penn State University) for directing the second author, while in a visit to Penn State University, to an argument of Katok-Kalinin \cite{BorisKatok}, which ultimately led to the proof of Theorem \ref{theorem:casobonatti}. We also thank Prof. Ali Tahzibi for hearing and providing suggestions on earlier versions of this paper. Finally, we would like to thank the referees for their careful reading and insightful comments, which greatly improved the clarity and presentation of the proofs.
	
	\section*{\textbf{Funding}}
	Gabriel Ponce has received research support from FAPESP, grants \# 2018/25624-0 and \# 2022/07762-2, and a partial grant from project 88887.310561/2018-00 - CAPES-PRINT. R\'egis Var\~ao also received a research support from FAPESP, grants \#  2016/22475-9, \# 18/13481-0 and \# 17/06463-3. Marcielis Espitia Noriega received a Ph.d fellowship from CAPES, Finance code 001. This study was also partially supported by the Coordena\c c\~ao de Aperfei\c coamento de Pessoal de N\'ivel Superior - Brasil (CAPES) and CNPq, Finance Code 001.
	
	\section*{\textbf{Declaration}}
	\textbf{Conflict of interest:} The authors declare that there are no competing interests. The authors declare that data
sharing is not applicable to this article as no datasets were generated or analyzed.
	
\bibliographystyle{plain}
\bibliography{Referencias}
\end{document}